\newtheorem{thm}{Theorem}[section]
\newtheorem{prop}[thm]{Proposition}
\newtheorem{lem}[thm]{Lemma}
\newtheorem{cor}[thm]{Corollary}
\theoremstyle{remark}
\theoremstyle{definition}
\newtheorem{defn}[thm]{Definition}
\newcommand*{\rom}[1]{\expandafter\@slowromancap\romannumeral #1@} % roman typographie
\renewcommand{\phi}{\varphi} % nicer phi
\newcommand{\E}{\mathrm{E}} % expectation
\newcommand{\A}{\mathfrak A} % sigma algebra
\newcommand{\F}{\mathcal F} % filtration
\newcommand{\R}{\mathbb R} % real numbers
\newcommand{\sgn}{\mathrm{sgn}} % sign function
\newcommand{\filt}{(\F_t)_{t\geq 0}} % filtration
\newcommand{\fraum}{(\Omega,\A,\filt,P)} % filtered probability space#
\newcommand{\nooutput}[1]{}
\newcommand{\sign}{\mathrm{sign}} % Sign function
\begin{document}

\title[H\"{o}lder continuous densities of solutions of SDEs with singular drift]{H\"{o}lder continuous densities of solutions of SDEs with measurable and path dependent drift coefficients.}
\date{\today}

\author[Banos]{David Ba\~nos}
\address[David Ba\~nos]{\\
Institute of Mathematics of the University of Barcelona \\
University of Barcelona\\
Gran Via de les Corts Catalanes 585\\
08007 Barcelona, Spain}
\email[]{davidru@math.uio.no}
\author[Kr\"uhner]{Paul Kr\"uhner}
\address[Paul Kr\"uhner]{\\
FAM - financial and actuarial mathematics \\
Technical University Vienna\\
Wiedner Hauptstrasse 8-10\\
1040 Vienna, Austria}
\email[]{paul.kruehner@fam.tuwien.ac.at}

\keywords{SDEs, regularity of densities, irregular drift, stochastic control.}
\subjclass[2010]{60H10, 49N60}

%\thanks{} 

\begin{abstract}
We consider a process given as the solution of a one-dimensional stochastic differential equation with irregular, path dependent and time-inhomogeneous drift coefficient and additive noise. H\"{o}lder continuity of the Lebesgue density of that process at any given time is achieved using a different approach than the classical ones in the literature. Namely, the H\"{o}lder regularity of the densities is obtained via a control problem by identifying the stochastic differential equation with the \emph{worst} global H\"{o}lder constant. Then we generalise our findings to a larger class of diffusion coefficients. The novelty of this method is that it is not based on a variational calculus and it is suitable for non-Markovian processes. 
\end{abstract}

\maketitle

\section{Introduction}

The examination of densities of random variables has been an active area of research during the last decades for its applications and in its own interest. To find criteria for the Lebesgue density of an absolutely continuous random variable to be regular has been the aim over the past years. P.\ Malliavin in \cite{malliavin.78} was interested in providing a probabilistic proof of L.\ H\"{o}rmander's theorem, see L.\ H\"{o}rmander \cite{hormander.69}, which, in short, is about a sufficient condition for an operator to be \emph{hypoelliptic}. He believed that H\"{o}rmander's condition implies that the finite dimensional Lebesgue densities of a solution of a stochastic differential equation (SDE) are smooth. For this reason, he developed a stochastic calculus of variations in order to provide the concept of derivative of a random variable in a certain sense. For instance, a classical result on this matter is that if the coefficients of an SDE are smooth with bounded derivatives and the so-called H\"{o}rmander's condition mentioned above holds, then the solution is smooth in the Malliavin sense at any time. Then, it is shown in \cite{malliavin.78} that Malliavin smoothness together with a non-degeneracy condition implies that the laws of the solutions are absolutely continuous with respect to the Lebesgue measure and the densities are smooth. Classical results based on analytic methods can be found in \cite{friedman.83} and \cite{evans.10}.

A different approach is credited to N.\ Bouleau and F.\ Hirsch \cite{bouleau.hirsch.86} where they show absolute continuity of the finite dimensional laws of solutions to SDEs based on a stochastic calculus of variations in finite dimensions utilising a limit argument. Also, as a motivation of \cite{bouleau.hirsch.86}, D.\ Nualart and M.\ Zakai \cite{nualart.zakai.89} found related results on the existence and smoothness of conditional densities of Malliavin differentiable random variables.

Further research has been carried out, let us mention some achievements on this topic. S.\ De Marco \cite{de.marco.11} shows local smoothness of densities on an open domain under the usual condition of ellipticity and that the coefficients are smooth on such domain. Also, V.\ Bally and A.\ Kohatsu-Higa \cite{bally.kohatsu-higa.10} show that the densities of a type of a two-dimensional degenerated SDE are bounded and they provide both upper and lower bounds, for this case, it is assumed that the coefficients are five times differentiable with bounded derivatives. Let us also mention the advances of V.\ Bally and L.\ Caramellino \cite{bally.caramellino.11} where an integration by parts formula (IPF) is derived and the integrability of the weight obtained in the formula gives the desired regularity of the density. As a consequence of the aforementioned result D.\ Ba\~{n}os and T.\ Nilssen \cite{banos.nilssen.14} give a condition to obtain regularity of densities of solutions to SDEs according to how regular the drift is. The technique is also based on Malliavin calculus and a sharper estimate on the moments of the derivative of the flow associated to the solution. This result is a slight improvement of a very similar condition obtained by S.\ Kusuoka and D.\ Stroock in \cite{kusuoka.stroock.82} when the diffusion coefficient is constant and the drift may be unbounded. Last but not least, we also cite the results by A.\ Kohatsu-Higa and A.\ Makhlouf \cite{kohatsu.makhlouf.13} where the authors show smoothness of the density for smooth coefficients that may also depend on an external process whose drift coefficient is irregular.

It appears to be impossible to deduce (optimal) regularity properties of densities of solutions of SDEs with irregular coefficients, e.g.\ non-Lipschitz drift coefficient. Nevertheless, some results in this direction have been obtained. For example, M.\ Hayashi, A.\ Kohatsu-Higa and G.\ Y\^{u}ki in \cite{hayashi.et.al.12} prove that SDEs with bounded H\"{o}lder continuous drift and smooth elliptic diffusion coefficients admit H\"{o}lder continuous densities at any time. Their method is also based on Malliavin calculus in connection with IPFs and estimates on the characteristic function of the solution.

We remark that all preceding works are based upon Malliavin calculus in connection with IPFs or Fourier analysis.

On the contrary, as a recent new technique, originally developed in \cite{Fournier.Printems.10}, there are related results where Malliavin calculus is not directly employed. A.\ Debussche and N.\ Fournier \cite{debussche.fournier.13} prove that the finite dimensional densities of a solution of an SDE with jumps lies in a certain (low regular) Besov space when the drift is H\"{o}lder continuous. The very related result by Hayashi, Kohatsu-Higa and Y\^{u}ki \cite{hayashi.et.al.14} shows that the density of the marginals of an SDE are H\"older continuous of some degree which depends on the Fourier regularity of the drift coefficient. The result requires that the SDE has a Markovian structure and the drift is bounded with its Fourier transform belonging to some Sobolev space. Their methodology is essentially based on finding estimates for the Fourier-Stiltjes transform of the finite dimensional laws of the solution. This paper improves the results in \cite{hayashi.et.al.14} in the one-dimensional case in two ways. First, we only need boundedness of the drift coefficient while Hayashi et.\ al \cite{hayashi.et.al.14} need additional Fourier regularity. Second, we do allow for path-dependence in the drift or even dependence on any other adapted process.

It is therefore important to emphasise that in this paper we do \emph{not} use Malliavin calculus or any other type of variational calculus. We show that It\^{o} processes with additive noise and merely bounded and measurable drift admit H\"{o}lder continuous densities of any order strictly less than one. In other words, we look at
\begin{align}\label{e: intro}
X_u(t) := x+\int_0^t u(s) ds + W(t), \quad t\in [0,T], \quad x\in \R,
\end{align}
where $W$ is a standard Brownian motion and $u$ is a progressively measurable stochastic process bounded by some constant $C>0$. Note that this includes SDEs where the drift may depend on the solution in a non-Markovian manner.

Our main technique is based on a worst-case study by employing optimal stochastic control. First we restrict to the class of controlled processes like \eqref{e: intro} whose drift coefficient is bounded by some constant $C>0$ and we look at a time $t>0$, no further regularity is assumed. Then, for given $t>0$ one seeks to find a \emph{worst-case} process which maximises the global H\"{o}lder constant of the density of $X_u(t)$ at time $t$ among all members in our class. Hence, one is reducing the overall problem to studying a specific case, namely the process in \eqref{e: intro} associated to the optimal control. If one is able to show that this \emph{optimal} process admits marginal H\"{o}lder continuous densities at any time $t>0$, then this implies that any other process $X_u(t)$, $t>0$, has H\"{o}lder continuous densities of any order $\alpha\in (0,1)$, as well. Nevertheless, solving the proposed stochastic control problem is not an easy task. In order to circumvent this difficulty we proceed in a slightly different way. We simply pick a specific control $\widetilde{u}$ in the class of allowed controls and compute \emph{how well} it performs compared to any other control, including the optimal one.

This idea is inspired by a previous work, see \cite{banos.kruehner.15}, to find the optimal lower and upper bounds for densities of It\^{o} type processes with additive noise. Our method is robust since no well-behaviour on the drift is needed other than merely boundedness and no Markovianity of the system is assumed, hence overcoming limitations of Malliavin calculus.

Moreover, it is known that the density of $X(t)$, $t>0$, given by
$$ X(t) = -\int_0^t \sgn(X(s)) ds + W(t),\quad t\geq 0,$$
where $W$ is a standard Brownian motion is globally Lipschitz continuous but not more, meaning that the regularity obtained is almost optimal, see e.g. \cite[Theorem 3.5]{banos.kruehner.15}.

This paper is organised as follows. In Section \ref{main results} we summarise our main results with some generalisations to non-trivial diffusion coefficients. In Section \ref{control problem} we pose the stochastic control problem of interest, then we provide an \emph{error} estimate for the performance of a selected control compared to any other admissible control. Finally, we prove the H\"{o}lder continuity of the density of $X_u(t)$ at any given time $t>0$. To conclude the paper, we summarise in two appendices the technical results needed in Section \ref{control problem}.

\subsection{Notations}
For a differentiable function $f:(0,T)\times \R \rightarrow \R$, $(t,x)\mapsto f(t,x)$, we denote by $\partial_1 f$, resp. $\partial_2 f$, the derivatives with respect to its first argument, resp. its second argument. For an open subset $U\subset \R$ and a function $f:(0,T)\times U\rightarrow \R$, $(t,x)\mapsto f(t,x)$, we say that $f$ is of class $C^{1,2}$ if $f\in C^{1,2}((0,T)\times U)$ being $f\in C^{1,2}((0,T)\times U)$ the space of continuous functions on $(0,T)\times U$ such that the partial derivatives $\partial_1 f$, $\partial_2 f$ and $\partial_2^2 f$ exist and are continuous. The notation $C^{1,2}([0,T)\times U)$ means that $f\in C^{1,2}((0,T)\times U)$ and the indicated partial derivatives have continuous extensions to $[0,T)\times U$. 
We denote the \emph{signum} function by $\sgn(x):=1_{\{x > 0\}}-1_{\{x\leq 0\}}$ for any $x\in\mathbb R$. Finally, we denote by $\phi$, respectively, $\Phi$, the density, respectively distribution function, of a standard normal random variable.

Further notations are used as in \cite{js.87}.

\section{Main results}\label{main results}
In this section we present our main result and some of its consequences. In particular, we will show that densities of the finite dimensional laws of the solution of an SDE with additive noise in the one-dimensional case are H\"{o}lder continuous of order $\alpha\in (0,1)$ and give some extensions to more general diffusion coefficients.

Throughout this section let $\fraum$ be a filtered probability space with the usual assumptions on the filtration $\mathcal{F}=(\mathcal{F}_t)_{t\geq 0}$, i.e.\ $\mathcal F_0$ contains all $P$-null sets and $\mathcal F$ is right-continuous, $W$ be a one-dimensional standard $\mathcal F$-Brownian motion.

The next results constitute the core result of this paper and will be proven in detail in the next section.

\begin{thm}\label{t:main theorem}
  Let $\beta$ be a bounded progressively measurable process with values in $\mathbb R$, $\alpha\in(0,1)$ and $X$ a stochastic process such that $dX(t) = \beta(t) dt + dW(t)$.
  
  Then $X(T)$ has $\alpha$-H\"older continuous density for any $T>0$. Furthermore, if $\beta$ is bounded by $K>0$, then the continuity constant is at most
  {\footnotesize $$ C^K_\alpha(T) := K^{1+\alpha}\left(\frac{1}{\sqrt{2\pi e^\alpha}(TK^2)^{(1+\alpha)/2}} + \frac{4}{\sqrt{2\pi e^\alpha}} \int_0^{TK^2} \left(\frac{\phi(\sqrt s)}{\sqrt{s}}+\Phi(\sqrt{s})\right) \frac{1}{(TK^2-s)^{(1+\alpha)/2}}ds\right)$$}
\end{thm}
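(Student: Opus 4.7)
The plan is to implement the worst-case stochastic-control strategy outlined in the introduction. First I would reduce to $K=1$ via the rescaling $Y(t) := K X(t/K^2)$: the process $Y$ satisfies an SDE of the same form with unit drift bound on the horizon $TK^2$, and the prefactor $K^{1+\alpha}$ in the statement emerges from tracking how densities and H\"older seminorms transform under this substitution. Next, a Girsanov change of measure rewrites the law of $X(T)$ as that of a shifted Brownian motion weighted by the Dol\'eans-Dade exponential $\mathcal E_T(\beta)$, yielding
\[
p(x) = g_T(x-x_0)\,\E\bigl[\mathcal E_T(\beta) \,\big|\, x_0 + B(T) = x\bigr],
\]
where $g_T$ denotes the centered Gaussian density of variance $T$. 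The problem thus reduces to showing that this conditional expectation is $\alpha$-H\"older in $x$ with a quantitative constant.

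Following the worst-case control heuristic, I would pick a specific admissible reference control $\widetilde{u}$---the natural candidate being $\widetilde u \equiv 0$, for which $X_{\widetilde u}(T)$ is Gaussian with density $g_T(\cdot-x_0)$. Its $\alpha$-H\"older seminorm, obtained by interpolating $\|g_T\|_\infty$ and $\|g_T'\|_\infty$, produces (after the $K=1$ rescaling) the first summand of $C_\alpha^K(T)$. For a general admissible $\beta$, I would use the It\^o representation
\[
\mathcal E_T(\beta) = 1 + \int_0^T \beta(s)\,\mathcal E_s(\beta)\, dB(s)
\]
to decompose $p(x) - g_T(x-x_0)$ as an integral over $s \in [0,T]$ of conditional expectations of $\beta(s)\,\mathcal E_s(\beta)$ given $B(T)$. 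Exploiting the Brownian Markov property on $[s,T]$, each $s$-slice factors as an $\mathcal F_s$-measurable integrand convolved against a Gaussian kernel of variance $T-s$, whose pointwise $\alpha$-H\"older norm scales like $(T-s)^{-(1+\alpha)/2}$; this produces the factor $(TK^2-s)^{-(1+\alpha)/2}$ in the statement. I expect the factors $\phi(\sqrt s)/\sqrt s$ and $\Phi(\sqrt s)$ in the integrand to come from explicit computations on the running Brownian motion $B(s)$ coupled to the bounded drift on $[0,s]$: $\phi(\sqrt s)/\sqrt s$ is the density of $B(1)^2$ at $s$ and $\Phi(\sqrt s) = \PP(B(1)\leq \sqrt s)$, and both appear naturally when one estimates the $\mathcal F_s$-conditional moments of $\beta(s)\mathcal E_s(\beta)$ under $|\beta|\leq 1$.

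The hard step is the quantitative estimate on the conditional expectation $x \mapsto \E[\beta(s)\mathcal E_s(\beta)\mid B(T)=x-x_0]$, because $\beta$ is only bounded measurable and may depend on the entire past of $X$. This precludes Markovian PDE or semigroup techniques, and the paper explicitly avoids Malliavin or variational calculus. The intended workaround, as announced in the introduction, is a direct comparison argument exploiting the Brownian-bridge decomposition and the boundedness of $\beta$, supported by the technical lemmas collected in the appendices. Once this error estimate is in hand, assembling it with the Gaussian reference's H\"older seminorm and integrating in $s$ produces exactly $C_\alpha^K(T)$; finiteness of the bound for each $\alpha \in (0,1)$ then yields the qualitative statement that $X(T)$ has an $\alpha$-H\"older density for every such $\alpha$.
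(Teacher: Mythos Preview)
Your rescaling step is exactly the paper's, but after that the two routes diverge substantially, and your proposal leaves the essential estimate as an unproved ``hard step'' while misidentifying where the specific factors in $C_\alpha^K(T)$ come from.

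The paper does \emph{not} work with the Girsanov density representation $p(x)=g_T(x-x_0)\,\E[\mathcal E_T(\beta)\mid B(T)=x-x_0]$ or any Brownian-bridge conditioning. Instead it recasts H\"older continuity of the density as a uniform bound on the second-difference functional
\[
J_{h,k}(y)=1_{[k,k+h]}(y)-1_{[0,h]}(y),\qquad 0<h\le k,
\]
via Proposition~\ref{p:Holder condition}, and then bounds $\E[J_{h,k}(X_u^x(T))]$ by a verification-type comparison: one applies It\^o's formula to $\widetilde V(t,X_u^x(t))$, where $\widetilde V(s,y)=\E[J_{h,k}(Z^{s,y}(T))]$ is the value function for the \emph{constant} reference control $v\equiv -1$ (so $Z^{s,y}(T)$ is Gaussian), and uses the Kolmogorov backward equation for $\widetilde V$ to reduce the discrepancy to
\[
\int_0^T \E\bigl[\partial_2\widetilde V(t,X_u^x(t))\,(u(t)-v(t,X_u^x(t)))\bigr]\,dt.
\]
The factor $\phi(\sqrt s)/\sqrt s+\Phi(\sqrt s)$ is \emph{not} a moment of $\beta(s)\mathcal E_s(\beta)$ nor the density of $B(1)^2$; it is $\beta_{s,1}(0)$, the sharp uniform upper bound on the density of $X_u^x(s)$ established in the authors' earlier paper (Proposition~\ref{p: BATOG}), used to pass from the expectation above to a spatial integral. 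Finally, the spatial integral of $|\partial_2\widetilde V(s,\cdot)|$ over $\{v\neq\sgn\partial_2\widetilde V\}$ is controlled by the structural fact (Lemma~\ref{l:normal estimate I}) that $\partial_2\widetilde V(s,\cdot)$ has exactly two sign changes $a(s)<b(s)$, so that integral collapses to $\widetilde V(s,b(s))-\widetilde V(s,a(s))$, which Lemma~\ref{l:normal estimate II} bounds by $hk^\alpha/(\sqrt{2\pi e^\alpha}(T-s)^{(1+\alpha)/2})$. The appendix lemmas are tailored to this $J_{h,k}$/value-function argument and would not directly support a Brownian-bridge estimate of the conditional expectation you wrote down; that conditional expectation, with $\beta$ path-dependent and $\mathcal E_s(\beta)$ unbounded, is precisely the obstacle the paper's control-theoretic detour is designed to avoid.
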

\begin{proof}
 Let $K$ be the uniform bound for $\beta$. W.l.o.g.\ $K>0$. Define
  \begin{align*}
   B(t) &:= KW(t/K^2), \\
   u(t) &:= \beta(t/K^2)/K, \\
   Y(t) &:= KX(t/K^2) \\
        &= KX(0) + \int_0^t u(s) ds + B(t).
  \end{align*}
  Then $Y(TK^2)$ admits the assertions of Proposition \ref{p: control problem} below and hence $Y(TK^2)$ has $\alpha$-H\"older continuous density. Clearly, $X(T)=Y(TK^2)/K$ has $\alpha$-H\"older continuous density as well with the constant given above. %Namely, C_\alpha(TK^2)K^{1+\alpha}
\end{proof}

We now focus on two simple extensions of our main result which utilise It\^o's formula to allow for a non-constant diffusion coefficient. In the next corollary one could allow for a path-dependent function $\sigma$ instead by using the pathwise It\^o-formula, cf.\ \cite[Theorem 4.1]{cont.fournie.13}. However, in order to not introduce heavy notation we rely on the classical It\^{o}-formula.
\begin{cor}
 Let $X$ be an $\mathcal F$-adapted, $\mathbb R$-valued stochastic process such that
  $$X(t) = x_0+ \int_0^t\beta(s)ds + \int_0^t\sigma(s,X(s))dW(s), \quad t\geq0$$
 where $x_0\in\mathbb R$, $\beta$ is progressively measurable process, $\sigma:\mathbb R_+\times \mathbb R\rightarrow (0,\infty)$ is a continuously differentiable function and assume that
  $$ u(t) := \frac{\beta(t)}{\sigma(t,X(t))}-\frac{\partial_2\sigma(t,X(t))}{2}-\int_0^{X(t)}\frac{\partial_1\sigma(t,x)}{\sigma^2(t,x)}dx,\quad t\geq 0 $$
is a bounded process.

Then $X(t)$ has Lebesgue density $\rho_t$ which is H\"{o}lder continuous of any order $\alpha\in (0,1)$.
\end{cor}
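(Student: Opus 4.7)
The plan is to apply a time-dependent Lamperti-type transformation to reduce the SDE for $X$ to one with additive noise, and then invoke Theorem~\ref{t:main theorem}. Concretely, I define
$$ F(t,x) := \int_0^x \frac{dy}{\sigma(t,y)}, \qquad (t,x)\in[0,\infty)\times\mathbb R. $$
Since $\sigma>0$ and $\sigma\in C^{1}$, $F$ is of class $C^{1,2}$ (differentiation under the integral is justified by continuity of $\partial_1\sigma$), with $\partial_2 F=1/\sigma$, $\partial_2^2 F=-\partial_2\sigma/\sigma^2$, and $\partial_1 F(t,x)=-\int_0^x \partial_1\sigma(t,y)/\sigma^2(t,y)\, dy$. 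For each fixed $t$, $F(t,\cdot)$ is a strictly increasing $C^{1}$-diffeomorphism of $\mathbb R$ onto an open interval.

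I then set $Y(t):=F(t,X(t))$ and apply It\^o's formula. Because $d\langle X\rangle_t=\sigma^2(t,X(t))\,dt$, the diffusion coefficient of $Y$ collapses to $\partial_2 F(t,X(t))\sigma(t,X(t))=1$, and the drift collects precisely into the three terms defining $u(t)$. Hence $Y$ satisfies the additive-noise SDE
$$ dY(t) = u(t)\, dt + dW(t), $$
with $u$ bounded and progressively measurable by hypothesis. Theorem~\ref{t:main theorem} then immediately yields that $Y(t)$ admits an $\alpha$-H\"older continuous density $\rho^Y_t$ for every $t>0$ and $\alpha\in(0,1)$.

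To conclude, I transfer the regularity back to $X(t) = F(t,\cdot)^{-1}(Y(t))$. The standard change-of-variables formula gives
$$ \rho_t(x) = \rho^Y_t(F(t,x))/\sigma(t,x),\qquad x\in\mathbb R, $$
and since $F(t,\cdot)$ is locally Lipschitz and $1/\sigma(t,\cdot)$ is continuous, the product is locally $\alpha$-H\"older in $x$, which suffices for the claim.

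The main subtlety I expect is that $F(t,\cdot)$ need not be globally Lipschitz (its derivative $1/\sigma$ may decay at infinity), so the H\"older constant for $\rho_t$ is in general only local in $x$; this is consistent with the bare H\"older assertion of the corollary. Everything else --- the It\^o computation and the change-of-variables formula --- is routine given the $C^1$-hypothesis on $\sigma$ and the boundedness of $u$.
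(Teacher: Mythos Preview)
Your proof is correct and follows essentially the same approach as the paper: define the Lamperti-type transform $F(t,x)=\int_0^x \sigma(t,z)^{-1}\,dz$, apply It\^o's formula to $Y(t)=F(t,X(t))$ to reduce to additive noise with drift $u$, invoke Theorem~\ref{t:main theorem}, and pull the density back through $F(t,\cdot)^{-1}$. Your version is in fact slightly more explicit than the paper's, which simply asserts that the claim follows from $F(t,\cdot)$ being invertible and $C^2$; your discussion of the local-versus-global H\"older constant is a reasonable caveat that the paper glosses over.
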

\begin{proof}
 Define $F(t,x):=\int_0^x \frac{1}{\sigma(t,z)}dz$ for any $x\in\mathbb R$, $t>0$ and $Y(t):=F(t,X(t))$ for any $t\geq 0$. Then, It\^o's formula yields
 $$ Y(t) = F(0,x_0) + \int_0^tu(s)ds + W(t).$$
Hence, $Y(t)$ has H\"older-continuous density by Theorem \ref{t:main theorem}. The claim follows because for $t>0$ we have $X(t) = (F(t,\cdot))^{-1}(Y(t))$ and $F(t,\cdot)$ is an invertible and twice continuously differentiable function.
\end{proof}

Sometimes, processes live on a half line or an interval. If the coefficients behave nicely enough, then our result still applies.
\begin{cor}
  Let $-\infty\leq c< d\leq \infty$, $x_0\in(c,d)$, $b:(c,d)\rightarrow \mathbb R$ be measurable, $\sigma:(c,d)\rightarrow (0,\infty)$ be Lipschitz continuous and assume that there is a constant $K>0$ such that $|b(x)|\leq K\sigma(x)$ for any $x\in (c,d)$.

 Then $X(t)$ has $\alpha$-H\"older continuous density for any $\alpha\in(0,1)$ where $X$ is any $\mathcal F$-adapted, $(c,d)$-valued process with $$X(t)=x_0 + \int_0^t b(X(s))ds+\int_0^t \sigma(X(s))dW(s),\quad t\geq 0.$$
\end{cor}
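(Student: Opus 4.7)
The plan is to apply the Lamperti transformation in order to reduce $X$ to an additive-noise process and then invoke Theorem~\ref{t:main theorem}. First I would define
\[
F(x) := \int_{x_0}^{x}\frac{1}{\sigma(z)}\,dz, \qquad x\in(c,d).
\]
Since $\sigma>0$ is continuous, $F$ is a strictly increasing $C^{1}$ bijection of $(c,d)$ onto an open interval $I\subseteq\mathbb{R}$. Because $\sigma$ is Lipschitz with constant $L$, $F'=1/\sigma$ is locally Lipschitz on $(c,d)$, so $F$ is of class $C^{1}$ with an absolutely continuous derivative and a.e.\ second derivative $F''=-\sigma'/\sigma^{2}$ that is locally bounded.

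Next I would apply It\^o's formula to $Y(t):=F(X(t))$. Since $F$ is only $C^{1}$ and not $C^{2}$, the classical formula has to be replaced by its extension for $C^{1}$ functions with absolutely continuous derivative, which I would obtain by mollifying $\sigma$ to $\sigma_{\varepsilon}\in C^{\infty}$, applying the classical formula to the corresponding $F_{\varepsilon}$, and passing to the limit. The resulting equation reads
\[
dY(t) = \left(\frac{b(X(t))}{\sigma(X(t))}-\frac{\sigma'(X(t))}{2}\right) dt + dW(t),
\]
and its drift $u(t):=b(X(t))/\sigma(X(t))-\sigma'(X(t))/2$ is progressively measurable and uniformly bounded: by the hypothesis $|b|\leq K\sigma$ the first term is bounded by $K$, while $|\sigma'|\leq L$ almost everywhere yields a bound $L/2$ on the second. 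Theorem~\ref{t:main theorem} applied to $Y$ then produces an $\alpha$-H\"older continuous density $\rho_{Y}$ for $Y(t)$ at each $t>0$.

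Finally I would transfer back to $X$. Since $F:(c,d)\to I$ is a $C^{1}$ diffeomorphism with $(F^{-1})'=\sigma\circ F^{-1}>0$ and $X(t)=F^{-1}(Y(t))$, the change-of-variables formula gives
\[
\rho_{X}(x) = \frac{\rho_{Y}(F(x))}{\sigma(x)}, \qquad x\in(c,d).
\]
On any compact subinterval of $(c,d)$, $\sigma$ is Lipschitz and bounded below away from $0$ and $F$ is Lipschitz, so $\rho_{X}$ is a quotient of the $\alpha$-H\"older function $\rho_{Y}\circ F$ by a Lipschitz positive function, hence $\alpha$-H\"older itself.

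The main obstacle will be the rigorous justification of It\^o's formula when $F'$ is only locally Lipschitz. The cleanest route is the mollification argument sketched above: $\sigma_{\varepsilon}\to\sigma$ uniformly on compacts with uniform Lipschitz bound, so $F_{\varepsilon}\to F$ in $C^{1}$ and $F_{\varepsilon}''\to F''$ a.e.\ in a locally bounded fashion; working with a localising sequence of stopping times that keeps $X$ in a compact subset of $(c,d)$, both the stochastic integral $\int F_{\varepsilon}'(X)\sigma(X)\,dW$ and the finite variation term $\frac{1}{2}\int F_{\varepsilon}''(X)\sigma^{2}(X)\,dt$ pass to the limit by dominated convergence, yielding the asserted identity.
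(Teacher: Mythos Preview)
Your argument is essentially the same as the paper's: apply the Lamperti transform $F(x)=\int_{x_0}^x \sigma(z)^{-1}\,dz$, check that $Y=F(X)$ has bounded drift, invoke Theorem~\ref{t:main theorem}, and pull the density back via $\rho_X(x)=\rho_Y(F(x))/\sigma(x)$. The only notable difference is in how the It\^o step is justified: the paper simply cites the It\^o--Tanaka formula (Revuz--Yor, Theorem~VI.1.1), which applies because $F$, having locally bounded second weak derivative $-\sigma'/\sigma^2$, is locally a difference of convex functions; your mollification-plus-localisation route achieves the same end and is equally valid.
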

\begin{proof}
  Define $F(y) := \int_{x_0}^y\frac{1}{\sigma(z)}dz$ for any $y\in(c,d)$, $Y(t) := F(X(t))$ and denote a bounded version of the absolutely continuous derivative of $\sigma$ by $\sigma'$. Let $t>0$. Then, It\^o-Tanaka's formula \cite[Theorem VI.1.1]{revuz.yor.99} yields
   $$ Y(t) = F(x_0) + \int_0^t\left(\frac{b(X(s))}{\sigma(X(s))}-\frac{1}{2}\sigma'(X(s))\right)ds + W(t),\quad t\geq 0.$$
   The process $u(t) := \frac{b(X(t))}{\sigma(X(t))}-\frac{1}{2}\sigma'(X(t))$ is bounded by assumptions and hence Theorem \ref{t:main theorem} yields that $Y(t)$ has $\alpha$-H\"older continuous density $\rho_{Y(t)}$ for any $\alpha\in(0,1)$. We have
    $$ \rho_{X(t)}(x) = \rho_{Y(t)}(F(x)) F'(x) $$
  where $\rho_{X(t)}$ denotes the density of $X(t)$ and hence $X(t)$ has $\alpha$-H\"older continuous density for any $\alpha\in(0,1)$.
\end{proof}

\section{A control problem}\label{control problem}
Throughout this section, let $\fraum$ be a filtered probability space with the usual assumptions on the filtration $\mathcal{F} = (\mathcal{F}_t)_{t\geq 0}$, i.e. $\mathcal{F}_0$ contains all $P$-null sets and $\mathcal{F}$ is right-continuous. Let $W$ be a one-dimensional $\mathcal{F}$-Brownian motion and define the process class
\begin{align*}
 \mathcal A &:= \{u: u\text{ is an }\mathbb R\text{-valued progressively measurable process bounded by }1\}
\end{align*}

We want to study processes of the form
\begin{align}\label{e: process}
X^x_u(t) := x+\int_0^t u(s)ds +W(t), \quad t \in [0,T],\quad x\in\R,
\end{align}
where $u\in \mathcal{A}$ and $T>0$ is some fixed time horizon. Since the process $u$ in \eqref{e: process} is bounded a simple application of Girsanov's theorem in connection with Novikov's condition guarantees existence of a density $\rho_{u,t,x}:\R \rightarrow \R$ of $X^x_u(t)$ at time $t\in(0,T]$ and starting point $x\in\mathbb R$ for any $u\in\mathcal A$.

We would like to find the process $u^\ast \in \mathcal{A}$ such that $X^x_{u^\ast}(T)$ has the density $\rho_{u^\ast,T,x}$ with the \emph{worst} H\"{o}lder constant among all densities $\rho_{u,T,x}$, $u\in \mathcal{A}$ which is a value in $[0,\infty]$. Then proving that $\rho_{u^\ast,T,x}$ has a H\"{o}lder continuous density yields that all $\rho_{u,T,x}$ also are H\"{o}lder continuous for any $u\in\mathcal A$.

An application of Proposition \ref{p:Holder condition} allows us to restate the question for H\"older continuous densities in form of the function
 $$ J_{h,k}(x) := 1_{\{x\in[k,k+h]\}}-1_{\{x\in[0,h]\}},\quad x\in\mathbb R,\quad 0<h\leq k. $$
 
Girsanov's theorem \cite[Theorem VIII.1.4]{revuz.yor.99} yields that $X^x_u(T)$ has absolutely continuous distribution function $F$. The next lemma connects H\"older continuity of $F'$ to properties of the random variable $X^x_u(T)$.
\begin{lem}
  Let $u\in\mathcal A$, $x\in\mathbb R$ and $\alpha\in(0,1]$. Then $X^x_u(T)$ has $\alpha$-H\"older continuous density if and only if there is $C>0$ such that 
   $$ \frac{\E[ J_{h,k}(X^x_u(T)) ]}{hk^\alpha} \leq C $$
   for any $0<h\leq k$.
\end{lem}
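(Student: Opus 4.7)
The strategy is to reduce the lemma, via an explicit integration against the density, to Proposition \ref{p:Holder condition} in the appendix, which characterises $\alpha$-H\"older continuity of a (suitably regular) function through integral conditions involving $J_{h,k}$. Writing $\rho = \rho_{u,T,x}$ for the density of $X^x_u(T)$, whose existence is guaranteed by the Girsanov argument recalled just before the lemma, a direct substitution of the definition of $J_{h,k}$ yields the identity
$$\E[J_{h,k}(X^x_u(T))] = \int_{\mathbb R} J_{h,k}(y)\rho(y)dy = \int_0^h \bigl(\rho(y+k)-\rho(y)\bigr)dy.$$

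With this identity in hand, the ``only if'' direction is immediate. If $\rho$ is $\alpha$-H\"older continuous with constant $C$, then $|\rho(y+k)-\rho(y)|\leq Ck^\alpha$ uniformly in $y\in\mathbb R$, and integrating over $y\in[0,h]$ gives $\E[J_{h,k}(X^x_u(T))]\leq Chk^\alpha$; hence the quotient in the statement is bounded by $C$ uniformly in $0<h\leq k$.

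For the converse, the same identity re-expresses the hypothesised bound as precisely the integral-increment condition on $\rho$ that is the hypothesis of Proposition \ref{p:Holder condition}, which I would then apply directly to the function $\rho$ to conclude pointwise $\alpha$-H\"older continuity. The present lemma is thus essentially a dictionary between the random variable $X^x_u(T)$ and its density, with the substantive work deferred to the appendix. The main obstacle is therefore external: inside Proposition \ref{p:Holder condition} one must upgrade a bound on averaged increments of the form $\int_0^h (\rho(y+k)-\rho(y))dy\leq Chk^\alpha$, valid for all $0<h\leq k$, into a genuine pointwise H\"older modulus for $\rho$, typically by combining a Lebesgue-differentiation argument with translation invariance and the regularity of $\rho$ inherited from its Girsanov representation $\rho(y)=\phi_T(y-x)\,\E^Q[\mathcal E\mid X^x_u(T)=y]$.
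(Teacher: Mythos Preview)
Your approach coincides with the paper's: both reduce the lemma to Proposition~\ref{p:Holder condition}. The paper's proof is a single sentence invoking that proposition, and your integral identity $\E[J_{h,k}(X^x_u(T))]=\int_0^h(\rho(y+k)-\rho(y))\,dy$ is exactly what makes the reduction explicit.

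There is, however, a point of confusion in your converse direction. You say you would ``apply [Proposition~\ref{p:Holder condition}] directly to the function $\rho$'', but that proposition takes an absolutely continuous function $F$ and concludes H\"older continuity of $F'$, not of $F$ itself. Applied with $F=\rho$ it would require $\rho$ to be absolutely continuous (not known a priori) and would yield a statement about $\rho'$. The correct application is with $F$ equal to the \emph{distribution function} of $X^x_u(T)$; then $F'=\rho$, and the second-difference condition (2) of Proposition~\ref{p:Holder condition} is precisely your integral identity rewritten as
\[
F(y+h+k)-F(y+k)-F(y+h)+F(y)=\int_y^{y+h}\bigl(\rho(z+k)-\rho(z)\bigr)\,dz.
\]
No Girsanov-based regularity of $\rho$ is needed beyond absolute continuity of $F$; the Lebesgue-differentiation step inside the proposition already does the work.

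A secondary remark, shared with the paper's terse proof: condition (2) of Proposition~\ref{p:Holder condition} demands the bound with absolute value and uniformly over all base points $y\in\mathbb R$, whereas the lemma's hypothesis is one-sided at the single base point $0$. In the paper's applications (Propositions~\ref{p:control problem} and~\ref{p: control problem}) the estimate is established for all starting points $x$, and since $X^x_u(T)=x+X^0_u(T)$ this supplies the missing supremum over translates; the absolute value is recovered by symmetry. You gesture at ``translation invariance'' but should be aware that this is input to Proposition~\ref{p:Holder condition}, not something it manufactures internally.
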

\begin{proof}
  This is immediate from Proposition \ref{p:Holder condition}.
\end{proof}

In the following argumentation we seek to maximise the expression $\E[ J_{h,k}(X^x_u(T))]$ over $x\in\mathbb R$ and $u\in\mathcal A$ for fixed $0<h\leq k$. In other words, we try to analyse the control problem
 \begin{align}\label{e: CP}
  \sup_{x\in\mathbb R,u\in\mathcal A} \E[ J_{h,k}(X^x_u(T))] = \E[ J_{h,k}(X^{x^\ast}_{u^\ast}(T))]
\end{align}  
 for some optimal starting location $x^\ast$ and optimal control $u^\ast$ where $0<h\leq k$ are fixed parameters. We like to mention, although not used in this article, that existence of an optimal control and optimal starting location can be shown by applying general theory. For the general theory of control problems we relate to {\O}ksendal and Sulem \cite{oeksendal.sulem.07}.

However, it appears to be difficult to solve the control problem in \eqref{e: CP} explicitly. This is why, we proceed differently and we simply pick a trivial control, namely $u=-1$, and compare its performance to arbitrary controls.

Before going to the main theorem of this section we need a couple of intermediate results which will be used in the proof. The first provides explicit bounds for the density of processes like \eqref{e: process} and the proof can be found in \cite{banos.kruehner.15}. The second is a key estimate for the quantity $E[J_{h,k}(X^x_u(T))]$ in terms of a bang-bang Markovian control $\widetilde u\in \mathcal{A}$, i.e.\ with $\widetilde u(t)=v(t,X^x_{\widetilde u}(t))$ where $v:\mathbb R_+\times\mathbb R\rightarrow\{-1,1\}$ is some measurable function.

\begin{prop}\label{p: BATOG}
Let $C>0$ and $u\in\mathcal A$. Then $X(t):=\int_0^tCu(s)ds + W(t)$ has Lebesgue density and one of its versions is given by
 $$\rho_t(x) := \limsup_{\epsilon\rightarrow 0}\frac{P(\vert X(t)-x\vert \leq \epsilon)}{2\epsilon},\ x\in\mathbb R.$$
Moreover, $\rho_t$ satisfies
$$0< \alpha_{t,C}(x) \leq  \rho_t(x) \leq \beta_{t,C}(x) \leq \beta_{t,C}(0) $$
for any $t>0$, $x\in\mathbb R$ where

\begin{align*}
 \alpha_{t,C}(0) &= \frac{1}{\sqrt{t}}\varphi\left(C\sqrt{t}\right)-C\Phi\left(-C\sqrt{t}\right),\quad\text{and} \\
 \beta_{t,C}(0) &= \frac{1}{\sqrt{t}}\varphi\left(C\sqrt{t}\right)+C\Phi\left(C\sqrt{t}\right)
\end{align*}
and $\Phi$ denotes the distribution function of the standard normal law and $\phi$ its density function. For $x\in\mathbb R\backslash\{0\}$ we have
\begin{align*}
  \alpha_{t,C}(x) &=\int_0^{tC^2} C\alpha_{tC^2-s,C}(0)\rho_{\theta_0^{Cx}}(s)ds\quad\text{and}\\
  \beta_{t,C}(x) &=\int_0^{tC^2}   C\beta_{tC^2 -s,C}(0) \rho_{\tau_0^{Cx}}(s)ds
\end{align*}
where 
\begin{align*}
 \rho_{\tau_0^x}(s)   &= \frac{|x|}{\sqrt{2\pi s^3}}e^{-\frac{(|x|-s)^2}{2s}}\quad{and}\\
 \rho_{\theta_0^x}(s) &= \frac{|x|}{\sqrt{2\pi s^3}}e^{-\frac{(|x|+s)^2}{2s}}
\end{align*}
for any $s>0$.
\end{prop}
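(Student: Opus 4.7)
The plan is to compare $X$ pathwise to reflected processes driven by the same noise but with constant extremal drift $\pm C$. Existence of a version of the density given by the limsup formula is a Lebesgue differentiation consequence: the distribution function of $X(t)$ is absolutely continuous by Girsanov (Novikov's condition holds since $|u|\leq 1$). The substantive content is the explicit two-sided bound, which I will obtain by reducing to the constant-drift case.

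For $x=0$, I apply Tanaka--Meyer's formula to $|X(t)|$ to obtain
\[
  |X(t)| = \int_0^t Cu(s)\sgn(X(s))\,ds + \tilde B(t) + L^0_t,
\]
where $\tilde B:=\int_0^\cdot \sgn(X)\,dW$ is a standard Brownian motion by L\'evy's characterisation and $L^0$ is the local time of $X$ at $0$. Since $u\sgn(X)\in[-1,1]$, monotonicity of the Skorokhod reflection map in its input yields pathwise
\[
  Y_-(t) \leq |X(t)| \leq Y_+(t),
\]
where $Y_\pm$ are the reflected Brownian motions on $[0,\infty)$ driven by $\tilde B$ with constant drifts $\pm C$. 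Taking $P(\cdot\leq\epsilon)/(2\epsilon)$ and passing to $\epsilon\downarrow 0$ gives $\tfrac12\rho_{Y_+}(0+)\leq\rho_t(0)\leq\tfrac12\rho_{Y_-}(0+)$. The two endpoint densities are then computed explicitly: one uses the Skorokhod representation $Y(t)\stackrel{d}{=} M(t)-Z(t)$ (running maximum minus the corresponding drifted Brownian motion) together with the standard joint law of $(Z,M)$ obtained by Girsanov-twisting the reflection principle. This identifies $\tfrac12\rho_{Y_-}(0+)=\beta_{t,C}(0)$ and $\tfrac12\rho_{Y_+}(0+)=\alpha_{t,C}(0)$.

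For $x\neq 0$, I apply the strong Markov property at the first hitting time $\tau_x:=\inf\{s:X(s)=x\}$. For $x>0$ (the case $x<0$ is symmetric), path continuity of $X$ forces $\tau_{x-\epsilon}\leq t$ on $\{|X(t)-x|\leq\epsilon\}$, and passing to the limit in $\epsilon$ yields the convolution representation
\[
  \rho_t(x) = \int_0^t f_{\tau_x}(s)\,\tilde\rho^{(s)}_{t-s}(0)\,ds,
\]
where $\tilde\rho^{(s)}_{t-s}(0)$ is the density at $0$ at time $t-s$ of the post-$\tau_x$ residual process, still in the bounded-drift class by strong Markov. The $x=0$ bounds apply to the second factor; the extremal constant-drift cases $\pm C$ (toward or away from $x$) furnish the bounds on $f_{\tau_x}$, which after the time change $s\mapsto sC^2$ become $\rho_{\tau_0^{Cx}}$ and $\rho_{\theta_0^{Cx}}$. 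Scaling then produces the stated integral formulas. The auxiliary inequality $\beta_{t,C}(x)\leq\beta_{t,C}(0)$ follows by direct estimation of the explicit integral, or alternatively by applying the $x=0$ argument to the translated process $X-x$, which shows $\beta_{t,C}(0)$ itself is a uniform upper bound on $\rho_t$.

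The main obstacle will be obtaining a pointwise (rather than merely stochastic) bound on the hitting time density $f_{\tau_x}$: the naive pathwise comparison $W-Ct\leq X\leq W+Ct$ yields only stochastic ordering of $\tau_x$ and hence only integrated bounds on its distribution function. Upgrading to pointwise control of the density requires a Girsanov argument that pins the terminal event $\{X(\tau_x)=x\}$ and exploits $|u|\leq 1$ to bound the resulting bridge functional; this is the technical heart of the proof executed in \cite{banos.kruehner.15}.
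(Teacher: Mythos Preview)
The paper's own ``proof'' of this proposition is a bare citation to \cite[Theorems~2.1, 2.2]{banos.kruehner.15}; no argument is given in the present article. Your proposal therefore goes further than the paper does, by sketching what that cited proof presumably contains. There is nothing to compare at the level of strategy, since the paper offers none.

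On the merits of your outline: the $x=0$ step via Tanaka--Meyer and monotonicity of the Skorokhod map is sound, and the identification of the two extremal reflected densities with $\alpha_{t,C}(0)$ and $\beta_{t,C}(0)$ is correct. Two points deserve tightening. First, the phrase ``strong Markov property'' is misleading: the process $X$ is in general non-Markovian (the drift $u$ is merely progressively measurable), so what you are really using is that, conditionally on $\mathcal F_{\tau_x}$, the post-$\tau_x$ increment $X(\tau_x+\cdot)-x$ is again of the form bounded drift plus an independent Brownian motion, hence the $x=0$ bounds apply to it. Second, as you yourself flag, the passage from stochastic ordering of $\tau_x$ to the pointwise density bounds $\rho_{\theta_0^{Cx}}\leq f_{\tau_x}\leq \rho_{\tau_0^{Cx}}$ is the genuine technical obstacle, and your sketch does not resolve it but defers to \cite{banos.kruehner.15}. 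Since the paper under review does exactly the same deferral, this is not a defect relative to the paper; but it does mean your write-up, like the paper's, is not a self-contained proof.
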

\begin{proof}
  See \cite[Theorems 2.1, 2.2]{banos.kruehner.15}.
\end{proof}

We now state an error estimate for bang-bang controls.
\begin{prop}\label{p: error estimate}
Let $0<h\leq k$ be fixed and $v:\mathbb R_+\times\mathbb R\rightarrow \{-1,1\}$ be measurable. Let $Z^{s,x}$ be the unique strong solution to the stochastic differential equation $$Z^{s,x}(t) := x + \int_s^t v(r,Z^{s,x}(r))dr + W(t)-W(s)$$ for $x\in\mathbb R$, $0\leq s\leq t\leq T$, cf. \cite[Theorem IX.3.5]{revuz.yor.99}, and assume that the function
 $$ \widetilde V(s,x) := \E[J_{h,k}(Z^{s,x}(T))] $$
is of class $C^{1,2}$ on $[0,T)\times\mathbb R$.

Then for any $x\in\mathbb R$, $u\in\mathcal A$ with 
 $$ \int_0^T \E[ (\partial_2\widetilde V(s,X^x_u(s))^2 ] ds < \infty$$
we have
\begin{align}\label{e: error}
 E[J_{h,k}(X^x_u(T))]\leq \widetilde V(0,x) + 2\int_0^T \int_{\R}\beta_{s,1}(0) 1_{\{v(s,z)\neq  \sgn(\partial_2 \widetilde V(s,z))\}}|\partial_2 \widetilde V(s,z)|dz  ds,
\end{align}
where $\beta_{s,1}(0)=\frac{\phi(\sqrt{s})}{\sqrt{s}}+\Phi(\sqrt{s})$ is the uniform bound given in Proposition \ref{p: BATOG}.
\end{prop}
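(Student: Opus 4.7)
The plan is to combine a Kolmogorov backward PDE for $\widetilde V$, It\^o's formula applied to $\widetilde V(t, X^x_u(t))$, and the pointwise density bound from Proposition \ref{p: BATOG}.

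First I would derive the PDE satisfied by $\widetilde V$ on $[0,T) \times \mathbb R$. By the Markov property of $Z$, the process $\widetilde V(t, Z^{s,y}(t))$ is a martingale on $[s,T)$. Since $\widetilde V$ is $C^{1,2}$, It\^o's formula together with the vanishing of its finite-variation part yields
\begin{equation*}
  \partial_1 \widetilde V + v\, \partial_2 \widetilde V + \tfrac12 \partial_2^2 \widetilde V = 0 \quad \text{on } [0,T) \times \mathbb R.
\end{equation*}
Next, for each $\epsilon \in (0,T)$, I apply It\^o's formula to $\widetilde V(t, X^x_u(t))$ on $[0, T-\epsilon]$ and substitute the PDE evaluated at $(t, X^x_u(t))$ to get
\begin{equation*}
  \widetilde V(T-\epsilon, X^x_u(T-\epsilon)) - \widetilde V(0,x) = \int_0^{T-\epsilon} [u(s) - v(s, X^x_u(s))]\, \partial_2 \widetilde V(s, X^x_u(s))\, ds + M_{T-\epsilon},
\end{equation*}
where $M_t := \int_0^t \partial_2 \widetilde V(s, X^x_u(s))\, dW(s)$ is a true martingale by the integrability assumption, so taking expectations removes it.

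A short case check using $|u| \le 1$ and $v \in \{-1,1\}$ gives the pointwise bound
\begin{equation*}
  [u(s) - v(s,z)]\, \partial_2 \widetilde V(s,z) \le 2\cdot 1_{\{v(s,z) \ne \sgn \partial_2 \widetilde V(s,z)\}}\, |\partial_2 \widetilde V(s,z)|.
\end{equation*}
Then Fubini's theorem combined with the density bound $\rho_{X^x_u(s)}(z) \le \beta_{s,1}(0)$ coming from Proposition \ref{p: BATOG} (applied to $X^x_u(s) - x$, whose density is translation-invariant in $x$) turns the expectation of the drift integral into the spatial integral appearing on the right-hand side of \eqref{e: error}, but truncated at $T-\epsilon$.

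The remaining step is to send $\epsilon \downarrow 0$. Monotone convergence handles the right-hand side. The delicate point, and the main obstacle, is the limit $E[\widetilde V(T-\epsilon, X^x_u(T-\epsilon))] \to E[J_{h,k}(X^x_u(T))]$: $\widetilde V$ is only $C^{1,2}$ up to but not including $T$, and $J_{h,k}$ is discontinuous. I would work on an enlargement of the probability space carrying a Brownian motion $W'$ independent of $\mathcal F_\infty$ and set $\tilde Z_\epsilon := Z^{T-\epsilon, X^x_u(T-\epsilon)}(T)$ constructed from $W'$, so that by the tower property $E[\widetilde V(T-\epsilon, X^x_u(T-\epsilon))] = E[J_{h,k}(\tilde Z_\epsilon)]$. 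Since $|\tilde Z_\epsilon - X^x_u(T-\epsilon)| \le \epsilon + |W'(\epsilon)| \to 0$ and $X^x_u(T-\epsilon) \to X^x_u(T)$ almost surely, while $X^x_u(T)$ has an absolutely continuous distribution (Girsanov) so that $J_{h,k}$ is a.s.\ continuous at $X^x_u(T)$, the bounded convergence theorem yields the desired limit and completes the proof.
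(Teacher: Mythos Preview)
Your proof is correct and follows the same strategy as the paper: derive the Kolmogorov backward equation for $\widetilde V$, apply It\^o's formula to $\widetilde V(t,X^x_u(t))$, use the PDE to reduce the drift term to $(u-v)\partial_2\widetilde V$, bound this pointwise by $2\cdot 1_{\{v\neq\sgn\partial_2\widetilde V\}}|\partial_2\widetilde V|$, and finally invoke the uniform density bound $\beta_{s,1}(0)$ from Proposition~\ref{p: BATOG}. The only substantive difference is that the paper writes the It\^o expansion directly up to $T-$ and then asserts ``Clearly, $E[\widetilde V(T-,X^x_u(T-))]=E[\widetilde V(T,X^x_u(T))]$'', whereas you make this passage to the limit rigorous via the coupling with an independent Brownian motion and bounded convergence; your treatment of that step is in fact more careful than the paper's.
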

\begin{proof}
Let $u\in\mathcal A$, $x\in\mathbb R$ such that 
 $$ \int_0^T \E[ (\partial_2\widetilde V(s,X^x_u(s))^2 ] ds < \infty. $$
Then, $M(t) := \int_0^t \partial_2 \widetilde V(s,X^x_u(s)) dW(s)$ for $t\in[0,T]$ is a martingale.

The function $\widetilde V$ satisfies the Kolmogorov backward equation

\begin{align}\label{e: kolmogorov}
\partial_1 \widetilde{V}(s,z) + \partial_2 \widetilde{V}(s,z) v(s,z) +\frac{1}{2}\partial_2^2 \widetilde{V}(s,z)=0
\end{align}
for any $s\in [0,T)$ and $z\in \R$.

On the other hand, observe that
$$E[J_{h,k}(X^x_u(T))] = E[\widetilde{V}(T,X^x_u(T))].$$

Since $\widetilde{V}\in C^{1,2}([0,T)\times \R)$ we can apply It\^{o}'s formula in order to express $E[\widetilde{V}(T-,X^x_u(T-))]$ as

\begin{align*}
E[\widetilde{V}&(T-,X^x_u(T-))]=\\
=&\, \widetilde{V}(0,x) + \int_0^T E\left[\partial_1 \widetilde{V}(t,X^x_u(t)) + \partial_2 \widetilde{V}(t,X^x_u(t)) u(t) +\frac{1}{2}\partial_2^2 \widetilde{V}(t,X^x_u(t))\right]dt\\
=&\, \widetilde{V}(0,x) + \int_0^T E\left[\partial_1 \widetilde{V}(t,X^x_u(t)) + \partial_2 \widetilde{V}(t,X^x_u(t)) v(t,X^x_u(t)) +\frac{1}{2}\partial_2^2 \widetilde{V}(t,X^x_u(t))\right]dt\\
&+ \int_0^T E\left[\partial_2 \widetilde{V}(t,X^x_u(t))\left(u(t)-v(t,X^x_u(t))\right) \right]dt\\
=&\, \widetilde{V}(0,x) + \int_0^T E\left[\partial_2 \widetilde{V}(t,X^x_u(t))\left(u(t)-v(t,X^x_u(t))\right)\right]dt,
\end{align*}
where we used the martingale property of $M$ in the first equality and in the last equality we have used relation \eqref{e: kolmogorov}. Clearly, we have $E[\widetilde{V}(T-,X^x_u(T-))]=E[\widetilde{V}(T,X^x_u(T))]$.

We continue to estimate the last integrand $E\left[\partial_2 \widetilde{V}(t,X^x_u(t))\left(u(t)-v(t,X^x_u(t))\right)\right]$ for $t\in[0,T]$. On $A:=\{v(t,X^x_u(t))\sign(\partial_2 \widetilde{V}(t,X^x_u(t)))=1\}$ we have 
 $$ \partial_2 \widetilde{V}(t,X^x_u(t))\left(u(t)-v(t,X^x_u(t))\right) < 0$$
and hence we get
 \begin{align*}
 E\left[\partial_2 \widetilde{V}(t,X^x_u(t))\left(u(t)-v(t,X^x_u(t))\right)\right] &\leq E\left[\partial_2 \widetilde{V}(t,X^x_u(t))\left(u(t)-v(t,X^x_u(t))\right)1_{A^c}\right] \\
  &\leq 2 E\left[|\partial_2 \widetilde{V}(t,X^x_u(t))|1_{A^c}\right].
\end{align*}  
 
Inserting the density for $X^x_u(t)$ together with Proposition \ref{p: BATOG} yields
 $$ E[\widetilde{V}(T,X^x_u(T))] \leq \tilde V(0,x) + 2 \int_0^T \int_{\R} \beta_{t,1}(0) |\partial_2 \widetilde{V}(t,z)|1_{\{v(t,z)\neq \sign(\partial_2\widetilde V(t,z))\}}dzdt. $$
\end{proof}

\begin{lem}\label{l:Normal control}
  Let $0<h\leq k$, $v:\mathbb R_+\times\mathbb R\rightarrow\{-1,1\}, (t,x) \mapsto -1$ and $\widetilde V$ as in Proposition \ref{p: error estimate}. Then, $\widetilde V$ is infinitely differentiable on $[0,T)\times\mathbb R$ and
   $$ \int_0^T \E[ (\partial_2\widetilde V(s,X^x_u(s))^2 ] ds < \infty $$
  for any $x\in\mathbb R$, $u\in\mathcal A$.
\end{lem}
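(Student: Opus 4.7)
My plan is to exploit the fact that with $v\equiv-1$, the process $Z^{s,x}$ has an explicit Gaussian distribution, so that $\widetilde V$ can be computed in closed form.

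First I would observe that $Z^{s,x}(T)=x-(T-s)+W(T)-W(s)$ is normally distributed with mean $x-(T-s)$ and variance $T-s$, so that
\begin{align*}
 \widetilde V(s,x) &= P(Z^{s,x}(T)\in[k,k+h]) - P(Z^{s,x}(T)\in[0,h]) \\
                   &= \Phi\!\left(\tfrac{k+h-x+(T-s)}{\sqrt{T-s}}\right) - \Phi\!\left(\tfrac{k-x+(T-s)}{\sqrt{T-s}}\right) - \Phi\!\left(\tfrac{h-x+(T-s)}{\sqrt{T-s}}\right) + \Phi\!\left(\tfrac{-x+(T-s)}{\sqrt{T-s}}\right).
\end{align*}
On $[0,T)\times\mathbb R$ the denominators are strictly positive and $\Phi$ is $C^\infty$, so $\widetilde V$ is a smooth function of $(s,x)$ by the chain rule. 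That establishes the first assertion.

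Next, differentiating in $x$ yields $\partial_2\widetilde V(s,x) = -f_{s,x}(k+h)+f_{s,x}(k)+f_{s,x}(h)-f_{s,x}(0)$, where $f_{s,x}$ denotes the density of $Z^{s,x}(T)$, that is, the $\mathcal N(x-(T-s),T-s)$ density. Using $(a+b+c+d)^{2}\leq 4(a^{2}+b^{2}+c^{2}+d^{2})$ and the identity $\int_{\mathbb R} f_{s,z}(y)^{2}\,dz = \tfrac{1}{2\sqrt{\pi(T-s)}}$ (a Gaussian integral obtained after completing the square), one gets
\begin{equation*}
 \int_{\mathbb R}(\partial_2\widetilde V(s,z))^{2}\,dz \;\leq\; \frac{8}{\sqrt{\pi(T-s)}}.
\end{equation*}

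For the second assertion, fix $x\in\mathbb R$ and $u\in\mathcal A$. By Proposition \ref{p: BATOG} applied to the shifted process $X^{x}_{u}(s)-x$ (which has drift bounded by $1$), its Lebesgue density is dominated pointwise by $\beta_{s,1}(0)=\phi(\sqrt s)/\sqrt s+\Phi(\sqrt s)$. Hence
\begin{equation*}
 \E\!\left[(\partial_2\widetilde V(s,X^{x}_{u}(s)))^{2}\right] \;\leq\; \beta_{s,1}(0)\int_{\mathbb R}(\partial_2\widetilde V(s,z))^{2}\,dz \;\leq\; \frac{8\,\beta_{s,1}(0)}{\sqrt{\pi(T-s)}},
\end{equation*}
and the right-hand side is integrable on $[0,T]$ because $\beta_{s,1}(0)$ is $O(s^{-1/2})$ at $s=0$ and bounded elsewhere, so that the integral reduces to combinations of Beta-type integrals $\int_0^T s^{-1/2}(T-s)^{-1/2}\,ds$ and $\int_0^T(T-s)^{-1/2}\,ds$, both finite. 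The only mildly delicate step is the naive bound $|\partial_2\widetilde V(s,x)|\lesssim (T-s)^{-1/2}$, which would give a logarithmic divergence when squared; this is overcome by first integrating in $z$ (gaining a $\sqrt{T-s}$) and only afterwards applying the uniform density bound.
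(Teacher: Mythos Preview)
Your proof is correct and follows essentially the same strategy as the paper: bound the density of $X^x_u(s)$ uniformly by $\beta_{s,1}(0)$ via Proposition~\ref{p: BATOG}, then control $\int_{\mathbb R}(\partial_2\widetilde V(s,z))^2\,dz$ by a constant times $(T-s)^{-1/2}$, and finally check integrability in $s$. The only difference is that the paper bounds the $L^2$ norm via the exact Gaussian-convolution identity of Lemma~\ref{l:phi square} (obtaining $8\phi(0)/\sqrt{2(T-s)}$), whereas you use the cruder inequality $(a+b+c+d)^2\le 4(a^2+b^2+c^2+d^2)$ together with $\int f_{s,z}(y)^2\,dz=1/(2\sqrt{\pi(T-s)})$; this yields a constant twice as large but is entirely adequate here.
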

\begin{proof}
 $\widetilde V$ is obviously infinitely differentiable on $[0,T)\times\mathbb R$.
 
Let $x\in\R$, $u\in\mathcal A$. We have
  \begin{align*}
    \E[ & (\partial_2\widetilde V(s,X^x_u(s))^2 ] \leq \int_{\R} \beta_{s,1}(0) (\partial_2\widetilde V(s,y))^2 dy \\
      &= \beta_{s,1}(0) \int_{\R}\frac{1}{T-s}\Bigg[\phi\left(\frac{h+k-y+T-s}{\sqrt{T-s}}\right)-\phi\left(\frac{k-y+T-s}{\sqrt{T-s}} \right)\\
      &\hspace{4.5cm}-\phi\left(\frac{h-y+T-s}{\sqrt{T-s}} \right)+\phi\left( \frac{-y+T-s}{\sqrt{T-s}}\right)\Bigg]^2dy \\
      &\leq \beta_{s,1}(0) \frac{1}{\sqrt{2(T-s)}}8\phi(0) \\
      &= \frac{\sqrt{8}\beta_{s,1}(0)}{\sqrt{2(T-s)}}\phi( 0 ).
  \end{align*}   
 where we used Proposition \ref{p: BATOG} for the first inequality and Lemma \ref{l:phi square} for the second inequality. The claim follows.
\end{proof}

\begin{prop}\label{p:control problem}
 Let $0< h\leq k$ and $\alpha\in(0,1)$. Then we have
\begin{align*}
 E[J_{h,k}(X^x_u(T))] &\leq hk^\alpha C_\alpha
\end{align*}
 where $$C_\alpha:= \frac{1}{\sqrt{2\pi e^\alpha}T^{(1+\alpha)/2}} + \frac{4}{\sqrt{2\pi e^\alpha}} \int_0^T \left(\frac{\phi(\sqrt s)}{\sqrt{s}}+\Phi(\sqrt{s})\right) \frac{1}{(T-s)^{(1+\alpha)/2}}ds$$ for any $u\in\mathcal A$, $x\in\mathbb R$.
\end{prop}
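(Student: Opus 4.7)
The plan is to apply Proposition~\ref{p: error estimate} with the trivial bang--bang control $v\equiv -1$, for which Lemma~\ref{l:Normal control} already verifies both smoothness of $\widetilde V$ and the square-integrability hypothesis for every $u\in\mathcal A$ and every $x\in\R$. Writing $g_t(v):=t^{-1/2}\phi(v/\sqrt t)$ for the Gaussian heat kernel, the value function has the explicit form
$$\widetilde V(s,z)=\int_0^h\bigl[g_{T-s}(y+k-z+T-s)-g_{T-s}(y-z+T-s)\bigr]\,dy,$$
and $\partial_2\widetilde V$ inherits two representations: a single $y$-integral of a difference of $g'_{T-s}$'s, and a double $(y,w)$-integral of $g''_{T-s}$. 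Because $v\equiv -1$ the indicator in \eqref{e: error} reduces to $\mathbf 1_{\{\partial_2\widetilde V>0\}}$, and since $\widetilde V(s,\pm\infty)=0$ we have $\int_\R\partial_2\widetilde V\,dz=0$, hence $\int_\R(\partial_2\widetilde V)^+\,dz=\tfrac12\int_\R|\partial_2\widetilde V|\,dz$. The target inequality thus splits into an estimate of $\widetilde V(0,x)$, yielding the first summand of $C_\alpha$, and an estimate of $\int_\R|\partial_2\widetilde V(s,z)|\,dz$ which, after multiplication by $\beta_{s,1}(0)$ and integration over $[0,T]$, yields the second.

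The unifying device is an $L^\infty$/$L^1$ interpolation between a \emph{crude} bound and a \emph{refined} bound that uses one more derivative of $g_{T-s}$ at the cost of picking up an extra factor $k$ via the fundamental theorem of calculus. For $\widetilde V(0,x)$ the crude bound is $\widetilde V(0,x)\le h\,\|g_T\|_\infty = h/\sqrt{2\pi T}$ (drop the negative part and use the Gaussian maximum), while the refined bound $\widetilde V(0,x)\le hk\,\|g_T'\|_\infty=hke^{-1/2}/(\sqrt{2\pi}T)$ follows from $g_T(\cdot+k)-g_T(\cdot)=\int_0^k g_T'\,dw$. The weighted geometric mean $A^{1-\alpha}B^\alpha$ collapses these to exactly $hk^\alpha/(\sqrt{2\pi e^\alpha}\,T^{(1+\alpha)/2})$.

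For $\int_\R|\partial_2\widetilde V(s,z)|\,dz$, Fubini applied to the two representations immediately yields the bounds $2h\,\|g'_{T-s}\|_{L^1}=2h\sqrt{2/(\pi(T-s))}$ and $hk\,\|g''_{T-s}\|_{L^1}$, respectively. The only not-entirely-routine step is the identity $\|g''_{T-s}\|_{L^1}=(T-s)^{-1}E|Z^2-1|$ with $Z\sim N(0,1)$; splitting the integral at $|Z|=1$ and integrating by parts once gives $E|Z^2-1|=4\phi(1)=4e^{-1/2}/\sqrt{2\pi}$. Interpolating the two bounds with exponents $1-\alpha$ and $\alpha$, halving for the positive part and using $2\sqrt 2/\sqrt\pi=4/\sqrt{2\pi}$, one arrives at the integrand $\frac{4}{\sqrt{2\pi e^\alpha}}\cdot\frac{hk^\alpha}{(T-s)^{(1+\alpha)/2}}\cdot\beta_{s,1}(0)$, which on integration in $s$ produces the second summand of $C_\alpha$.

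The main obstacle is purely bookkeeping: tracking the powers of $2$, $\pi$, $e$, and $T-s$ through the two interpolations so that they assemble into $\sqrt{2\pi e^\alpha}$ and $(T-s)^{(1+\alpha)/2}$. In particular the exponents must collapse via $2^{1+\alpha}\cdot 2^{(1-\alpha)/2}\cdot 2^{-\alpha/2}=2^{3/2}$ and $(T-s)^{-(1-\alpha)/2-\alpha}=(T-s)^{-(1+\alpha)/2}$, and the constant $E|Z^2-1|$ must be computed in closed form to get the $e^\alpha$ inside the square root correctly. Conceptually the proof is short once Proposition~\ref{p: error estimate} is invoked with the choice $v\equiv -1$.
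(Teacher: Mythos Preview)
Your argument is correct and yields exactly the constant $C_\alpha$ stated. The first step (choosing $v\equiv -1$ and invoking Proposition~\ref{p: error estimate} via Lemma~\ref{l:Normal control}) and the bound on $\widetilde V(0,x)$ coincide with the paper's route: your $L^\infty/L^1$ interpolation on $g_T$ is precisely Lemma~\ref{l:Holder condition} applied to $\phi$, i.e.\ Corollary~\ref{c:Holder cont of phi}, which is what underlies Lemma~\ref{l:normal estimate II}.

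The genuine difference is in the treatment of the spatial integral $\int_{\R}\mathbf 1_{\{\partial_2\widetilde V>0\}}|\partial_2\widetilde V(s,z)|\,dz$. The paper invokes the structural Lemma~\ref{l:normal estimate I}, showing that $\partial_2\widetilde V(s,\cdot)$ has exactly two zeros and is positive precisely on an interval $(a(s),b(s))$; the integral then collapses to $\widetilde V(s,b(s))-\widetilde V(s,a(s))$, which is bounded by $2\sup_z|\widetilde V(s,z)|$ and hence by Lemma~\ref{l:normal estimate II}. You bypass this sign analysis entirely: from $\widetilde V(s,\pm\infty)=0$ you get $\int_\R(\partial_2\widetilde V)^+\,dz=\tfrac12\int_\R|\partial_2\widetilde V|\,dz$, and then bound the total variation by interpolating $2h\|g'_{T-s}\|_{L^1}$ against $hk\|g''_{T-s}\|_{L^1}$, using the closed-form $\|\phi''\|_{L^1}=E|Z^2-1|=4\phi(1)$. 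Both routes produce the identical bound $2hk^\alpha/(\sqrt{2\pi e^\alpha}(T-s)^{(1+\alpha)/2})$ for $\int(\partial_2\widetilde V)^+\,dz$. Your approach is more elementary in that it avoids the delicate zero-counting of Lemma~\ref{l:normal estimate I}; the paper's approach has the conceptual advantage of linking the error term directly back to the value function $\widetilde V$ itself rather than to $L^1$ norms of heat-kernel derivatives.
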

\begin{proof}
 Let $u\in\mathcal A$, $x\in\mathbb R$ and let $v$ and $\widetilde V$ be as in Lemma \ref{l:Normal control}. Then Lemma \ref{l:Normal control} yields that the requirements of Proposition \ref{p: error estimate} hold. Thus Proposition \ref{p: error estimate} yields
  \begin{align*}
 E[J_{h,k}(X^x_u(T))]\leq \widetilde V(0,x) + 2\int_0^T \int_{\R}\beta_{s,1}(0) 1_{\{1 =   \sgn(\partial_2 \widetilde V(s,z))\}} \partial_2 \widetilde V(s,z) dzds.
\end{align*}
 Lemma \ref{l:normal estimate II} implies
  \begin{align*}
   |\widetilde V(t,x)| &\leq \frac{hk^\alpha}{\sqrt{2\pi e^\alpha}(T-t)^{(1+\alpha)/2}}.
\end{align*} 
 for any $t\in[0,T)$. Moreover, Lemma \ref{l:normal estimate I} yields that there are $a(t),b(t)\in\mathbb R$ with $a(t)\leq b(t)$ such that $\partial_2\widetilde V(t,\cdot)$ is negative on $\R\backslash [a(t),b(t)]$ and positive on $(a(t),b(t))$ for any $t\in[0,T)$. Hence, we get
  \begin{align*}
\int_0^T \int_{\R}&\beta_{s,1}(0) 1_{\{1 = \sgn(\partial_2 \widetilde V(s,z))\}} \partial_2 \widetilde V(s,z) dzds \\
&= \int_0^T \beta_{s,1}(0)\int_{a(s)}^{b(s)} \partial_2 \widetilde V(s,z) dzds \\
 &= \int_0^T \beta_{s,1}(0) (\widetilde V(s,b(s))-\widetilde V(s,a(s))) ds  \\
 &\leq 2\int_0^T \beta_{s,1}(0) \frac{hk^\alpha}{\sqrt{2\pi e^\alpha}(T-s)^{(1+\alpha)/2}} ds \\
 & = 2\frac{hk^\alpha}{\sqrt{2\pi e^\alpha}}\int_0^T \left(\frac{\phi(\sqrt s)}{\sqrt{s}}+\Phi(\sqrt{s})\right) \frac{1}{(T-s)^{(1+\alpha)/2}}ds.
\end{align*}
The claim follows.
\end{proof}

\begin{prop}\label{p: control problem}
 Let $u\in\mathcal A$, $x\in\mathbb R$ and $\alpha\in(0,1)$. Then $X^x_u(T)$ has $\alpha$-H\"older continuous density with constant $C_\alpha$ given in Proposition \ref{p:control problem}.
\end{prop}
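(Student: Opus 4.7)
The plan is to observe that this proposition is essentially a repackaging of what has already been established. Proposition \ref{p:control problem} gives a uniform bound
$$ E[J_{h,k}(X^x_u(T))] \leq h k^\alpha C_\alpha $$
that holds for all $0 < h \leq k$, all $x \in \mathbb R$, and all $u \in \mathcal A$. The earlier lemma (the one immediately following the statement of the control problem) asserts that $X^x_u(T)$ has $\alpha$-H\"older continuous density if and only if there is a constant $C > 0$ such that $E[J_{h,k}(X^x_u(T))] \leq C h k^\alpha$ for every $0 < h \leq k$, and tracing through the appendix result Proposition \ref{p:Holder condition} one sees that the admissible Hölder constant can be taken to equal $C$.

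First, I would note that by Girsanov's theorem combined with Novikov's condition (which applies because $u$ is bounded by $1$), $X^x_u(T)$ already admits a Lebesgue density $\rho_{u,T,x}$, so the question reduces purely to regularity of this density. Then I would invoke Proposition \ref{p:control problem} to obtain the uniform estimate with constant $C_\alpha$, and finally appeal to the $J_{h,k}$-characterisation lemma to conclude that $\rho_{u,T,x}$ is $\alpha$-Hölder continuous, with Hölder constant at most $C_\alpha$.

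There is no real obstacle here; the work has already been done. The only mild subtlety is making sure that the constant extracted from Proposition \ref{p:Holder condition} is precisely the same $C_\alpha$ and does not pick up an additional multiplicative factor, but this can be checked by direct inspection of the appendix. So the proof will be a two-line application: cite Proposition \ref{p:control problem} for the inequality, cite the preceding lemma (via Proposition \ref{p:Holder condition}) for the equivalence, and read off that $C_\alpha$ serves as the Hölder constant.
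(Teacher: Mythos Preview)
Your proposal is correct and matches the paper's approach: the paper likewise reduces to Proposition~\ref{p:control problem} and then invokes Proposition~\ref{p:Holder condition}. The only cosmetic difference is that the paper unpacks the intermediate lemma by writing $F(y)=P(X_u^{x-y}(T)\leq 0)$ explicitly, so that the uniformity of the bound in Proposition~\ref{p:control problem} over all starting points $x$ visibly supplies the second-difference estimate at every base point $y$ required by Proposition~\ref{p:Holder condition}; you leave this translation step implicit inside the cited lemma.
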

\begin{proof}
  Let $F(y) := P(X_u^x(T) \leq y) = P(X_u^{x-y}(T) \leq 0)$ be the distribution function of $X_u^x(T)$. Then $F$ is absolutely continuous and Proposition \ref{p:control problem} yields that
   $$ F(y+h+k)-F(y+k)-F(y+h)-F(y) \leq hk^\alpha C_\alpha $$
 for any $y\in\R$, $0<h\leq k$. Thus Proposition \ref{p:Holder condition} yields that $F$ is continuously differentiable and its derivative is $\alpha$-H\"older continuous with constant $C_\alpha$.
\end{proof}

\appendix
\section{H\"older properties}\label{app 1}
In this section we gather some results on H\"older continuity. We start by recalling the definition.
\begin{defn}\label{d:Holder}
Let $I\subseteq\mathbb R$ be an interval with at least two points and $\alpha\in(0,1]$. A function $f:I \rightarrow \R$ is said to be globally $\alpha$-H\"{o}lder continuous with constant $C$ if
$$\left|f(x)-f(y)\right| \leq C|x-y|^\alpha $$
for any $x,y\in I$.

In this article, we will not use the concept of local H\"older continuous function and, hence, we will always mean globally H\"{o}lder continuous if we say that a function is H\"older continuous.
\end{defn}

Next we give an elementary interpolation result for H\"older continuity.
\begin{lem}\label{l:Holder condition}
 Let $I\subseteq\mathbb R$ be an interval containing at least two points, $\alpha\in (0,1]$, $f:I\rightarrow\mathbb R$ be Lipschitz-continuous and bounded and define
  \begin{align*}
    L_f &:= \sup\left\{ \frac{|f(x)-f(y)|}{|x-y|}: x,y\in I, x\neq y \right\}, \\
    B_f &:= \sup\{ |f(x)-f(y)|: x,y\in I\}.
  \end{align*}
  Then, $f$ is $\alpha$-H\"older continuous and
   $$ |f(x)-f(y)| \leq \left(B_f^{1-\alpha} L_f^{\alpha}\right) |x-y|^\alpha $$ 
   for any $x,y\in I$.
\end{lem}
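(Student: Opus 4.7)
The plan is to observe that for any $x,y\in I$ we have two elementary bounds on $\vert f(x)-f(y)\vert$: the Lipschitz bound $\vert f(x)-f(y)\vert\leq L_f \vert x-y\vert$ and the boundedness estimate $\vert f(x)-f(y)\vert\leq B_f$. The target estimate is a geometric-mean interpolation of these two, so one should write $\vert f(x)-f(y)\vert$ as the product of its $(1-\alpha)$-power times its $\alpha$-power and bound each factor by the appropriate estimate.

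Concretely, I would fix $x,y\in I$ with $x\neq y$ (the case $x=y$ being trivial) and use the identity
\[ \vert f(x)-f(y)\vert = \vert f(x)-f(y)\vert^{1-\alpha}\,\vert f(x)-f(y)\vert^{\alpha}. \]
Applying the boundedness estimate to the first factor and the Lipschitz estimate to the second yields
\[ \vert f(x)-f(y)\vert \leq B_f^{1-\alpha}\bigl(L_f \vert x-y\vert\bigr)^{\alpha} = B_f^{1-\alpha}L_f^{\alpha}\vert x-y\vert^{\alpha}, \]
which is exactly the claimed inequality. Since $\alpha\in(0,1]$, the exponents $1-\alpha$ and $\alpha$ are both nonnegative, so taking powers preserves the inequalities used.

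There is no real obstacle here: the only mild subtleties are handling the edge cases $\alpha=1$ (where the bound reduces to $L_f\vert x-y\vert$, i.e.\ the Lipschitz estimate, since $B_f^0=1$ with the convention $0^0=1$ if $f$ is constant) and $x=y$ (where both sides vanish). Boundedness and Lipschitz continuity of $f$ ensure that $B_f$ and $L_f$ are finite, so the right-hand side is well-defined.
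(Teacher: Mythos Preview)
Your proof is correct and is in fact cleaner than the paper's. The paper argues by cases: it sets $c:=B_f/L_f$ and, for $|x-y|\leq c$, starts from the Lipschitz bound $|f(x)-f(y)|\leq L_f|x-y|$ and uses $|x-y|^{1-\alpha}\leq c^{1-\alpha}$ to reach $B_f^{1-\alpha}L_f^{\alpha}|x-y|^{\alpha}$; for $|x-y|>c$ it starts from the oscillation bound $|f(x)-f(y)|\leq B_f$ and uses $1\leq (|x-y|/c)^{\alpha}$ to get the same thing. Your argument bypasses the case split by writing $|f(x)-f(y)|$ as its own $(1-\alpha)$-power times its $\alpha$-power and bounding each factor separately; this is the standard interpolation trick and yields the result in one line. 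The paper's version makes the crossover scale $c=B_f/L_f$ explicit, which has some expository value, but your route is shorter and avoids having to single out the degenerate case $L_f=0$.
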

\begin{proof}
  If $L_f=0$, then $f$ is constant and the claim obviously holds. Thus, we may assume that $L_f>0$ and, hence, $B_f>0$. Define $c:= B_f/L_f$. Let $x,y\in I$.
  
  \underline{Case 1}: $|x-y|\leq c$. Then, we have 
  $$|f(x)-f(y)| \leq L_f|x-y| \leq L_f c \frac{|x-y|^\alpha}{c^\alpha} = B_f^{1-\alpha} L_f^{\alpha} |x-y|^\alpha.$$
  
  \underline{Case 2}: $|x-y|> c$. Then, we have
  $$|f(x)-f(y)| \leq B_f \leq B_f \frac{|x-y|^\alpha}{c^\alpha} = B_f^{1-\alpha} L_f^{\alpha} |x-y|^\alpha. $$
\end{proof}

Corollary \ref{l:Holder condition} applied to the normal density yields a relative simple constant.
\begin{cor}\label{c:Holder cont of phi}
  Let $\alpha\in(0,1]$. Then $\phi$ is $\alpha$-H\"older continuous with constant $C_\alpha:=\frac{1}{\sqrt{2\pi e^\alpha}}$, i.e.\ $|\phi(x)-\phi(y)|\leq \frac{|x-y|^\alpha}{\sqrt{2\pi e^{\alpha}}}$ for any $x,y\in\mathbb R$.  
\end{cor}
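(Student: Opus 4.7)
The plan is a direct application of Lemma \ref{l:Holder condition} to $f=\phi$ on $I=\mathbb R$, so I only need to compute the two quantities $L_\phi$ and $B_\phi$ and then simplify the resulting product $B_\phi^{1-\alpha}L_\phi^\alpha$.

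For $L_\phi$, I would observe that $\phi$ is continuously differentiable with $\phi'(x)=-\frac{x}{\sqrt{2\pi}}e^{-x^2/2}$, so $L_\phi=\sup_{x\in\mathbb R}|\phi'(x)|$ by the mean value theorem. A one-variable calculus check (differentiate $xe^{-x^2/2}$ to get $(1-x^2)e^{-x^2/2}$) shows the maximum of $|\phi'|$ is attained at $x=\pm 1$, giving $L_\phi=\frac{1}{\sqrt{2\pi e}}$. For $B_\phi$, since $\phi\geq 0$ with $\sup\phi=\phi(0)=\frac{1}{\sqrt{2\pi}}$ and $\inf\phi=0$ (limits at $\pm\infty$), we have $B_\phi=\frac{1}{\sqrt{2\pi}}$.

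Plugging into the interpolation bound from Lemma \ref{l:Holder condition} gives
\[
 B_\phi^{1-\alpha}L_\phi^{\alpha}=\left(\frac{1}{\sqrt{2\pi}}\right)^{1-\alpha}\left(\frac{1}{\sqrt{2\pi\,e}}\right)^{\alpha}=\frac{1}{\sqrt{2\pi}}\cdot\frac{1}{e^{\alpha/2}}=\frac{1}{\sqrt{2\pi e^\alpha}},
\]
which is exactly the claimed constant $C_\alpha$. There is no real obstacle here; the only thing to be slightly careful about is that Lemma \ref{l:Holder condition} requires $f$ to be Lipschitz and bounded on $I$, both of which are immediate for $\phi$ from the explicit bounds $|\phi'|\le\frac{1}{\sqrt{2\pi e}}$ and $0\le\phi\le\frac{1}{\sqrt{2\pi}}$ just computed.
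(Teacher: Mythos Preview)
Your proof is correct and follows exactly the approach the paper intends: the paper's own proof consists of the single sentence ``This is an immediate consequence of Lemma~\ref{l:Holder condition}'', and you have simply supplied the explicit computation of $L_\phi=\frac{1}{\sqrt{2\pi e}}$ and $B_\phi=\frac{1}{\sqrt{2\pi}}$ that makes this immediate.
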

\begin{proof}
 This is an immediate consequence of Lemma \ref{l:Holder condition}
\end{proof}

The next proposition gives an exact condition that ensures that the derivative of a function is H\"older continuous. If $X$ is a random variable, then H\"older-continuity of its density can be expressed in terms of weighted differences of certain probabilities.
\begin{prop}\label{p:Holder condition}
 Let $F:\mathbb R\rightarrow\mathbb R$ be absolutely continuous and $\alpha\in(0,1]$. Then the following two statements are equivalent:
 \begin{enumerate}
  \item $F$ is continuously differentiable and $F'$ is $\alpha$-H\"older continuous.
  \item There is $C>0$ such that
  $$ \sup_{x\in\mathbb R}\sup_{k\geq h>0} \left|\frac{F(x+h+k)-F(x+k)-F(x+h)+F(x)}{hk^{\alpha}}\right| \leq C\, . $$
 \end{enumerate}
\end{prop}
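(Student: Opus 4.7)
My plan is to prove the two implications separately.

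For $(1)\Rightarrow(2)$, I would use the fundamental theorem of calculus twice to rewrite
\[
F(x+h+k)-F(x+k)-F(x+h)+F(x) \;=\; \int_0^h \bigl[F'(x+k+s)-F'(x+s)\bigr]\,ds,
\]
and then bound the integrand pointwise by $Ck^\alpha$ using the $\alpha$-H\"older continuity of $F'$. This gives the overall bound $Chk^\alpha$ uniformly in $x$ and in all admissible pairs $0<h\le k$, so (2) holds with the same constant.

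The substantive direction is $(2)\Rightarrow(1)$. I would set $f:=F'$, which exists almost everywhere and is locally integrable because $F$ is absolutely continuous, and introduce the averaged difference quotient $g_h(y):=(F(y+h)-F(y))/h$. The key algebraic observation is that the numerator in (2) equals $h\bigl(g_h(x+k)-g_h(x)\bigr)$, so the hypothesis reads
\[
|g_h(x+k)-g_h(x)|\;\le\;C k^\alpha \qquad \text{for every } x\in\mathbb R \text{ and every } 0<h\le k.
\]
Fixing two Lebesgue points $y<z$ of $f$ and setting $k:=z-y$, I would choose the sequence $h_n:=k/n\le k$: by the Lebesgue differentiation theorem $g_{h_n}(y)\to f(y)$ and $g_{h_n}(z)\to f(z)$, so the inequality passes to the limit and yields $|f(z)-f(y)|\le C|z-y|^\alpha$. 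Since the Lebesgue set of $f$ has full measure and is therefore dense, $f$ restricted to it is uniformly $\alpha$-H\"older and admits a unique continuous extension $\tilde f:\mathbb R\to\mathbb R$ which is itself $\alpha$-H\"older with the same constant $C$. As $\tilde f=f$ almost everywhere, absolute continuity of $F$ yields $F(y)=F(0)+\int_0^y \tilde f(t)\,dt$, and continuity of $\tilde f$ upgrades this to $F\in C^1(\mathbb R)$ with $F'=\tilde f$.

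The main obstacle is precisely this limit passage. What makes it work is the fortunate fact that the hypothesis's restriction $h\le k$ is exactly the regime where one wants $h$ small and $k$ fixed; had the quantifier been reversed, one would have to rework the argument. The only other subtlety is ensuring that \emph{both} endpoints of the increment are Lebesgue points before sending $h\to 0$; all remaining steps are routine bookkeeping.
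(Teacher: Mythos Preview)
Your proposal is correct and follows essentially the same approach as the paper's proof: both directions use the fundamental theorem of calculus for $(1)\Rightarrow(2)$, and for $(2)\Rightarrow(1)$ both invoke the Lebesgue differentiation theorem on a full-measure set, fix $k$ and send $h\to 0$ (you via the sequence $h_n=k/n$, the paper via an $\epsilon$--$\delta$ choice of small $h$) to obtain the H\"older estimate on that set, then extend continuously and conclude $F\in C^1$.
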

\begin{proof}
 $(1)\Rightarrow(2)$: Assume (1) and let $C>0$ be the H\"{o}lder constant of $F'$. We have
 \begin{align*}
   | F(x+h+k)-F(x+k)-F(x+h)+F(x)| &= \left|\int_{x}^{x+h} \left(F'(y+k)-F'(y)\right) dy \right| \\
                                  &\leq Chk^{\alpha}
 \end{align*}
 for any $x\in\mathbb R$, $k>0$, $h>0$ and we used the fundamental theorem of calculus. Thus, (2) follows.
 
 $(2)\Rightarrow(1)$: Assume (2).  Let $f$ be a version of the absolute continuous derivative of $F$, i.e.\ $\int_{x}^{x+h}|f(y)|dy < \infty$ and $\int_x^{x+h}f(y)dy = F(x+h)-F(x)$ for any $x\in\mathbb R$, $h>0$. By Lebesgue's differentiation theorem \cite[Corollary 2.1.17]{grafakos.08} we have
  $$ f(x) = \lim_{h\rightarrow 0}\frac{F(x+h)-F(x)}{h} $$
 for any $x\in A$ where $A\subseteq \mathbb R$ is Borel measurable and the Lesbesgue measure of $\mathbb R\backslash A$ equals zero. In particular, $A$ is dense in $\mathbb R$.

 Let $C>0$ be the given constant, $x\in A$ and $k>0$ such that $x+k\in A$. We show that $|f(x+k)-f(x)|\leq Lk^\alpha$. Let $\epsilon>0$ be arbitrary and choose $h\in(0,k]$ such that
  \begin{align*}
     \left|f(x)-\frac{F(x+h)-F(x)}{h}\right| &\leq \epsilon,\quad\text{and} \\
     \left|f(x+k)-\frac{F(x+h+k)-F(x+k)}{h}\right| &\leq \epsilon.
  \end{align*}
 Thus, we have
 \begin{align*}
   \left| f(x+k)-f(x)\right| &\leq 2\epsilon + \left|\frac{F(x+h+k)-F(x+k)-F(x+h)+F(x)}{h} \right| \\
   &\leq 2\epsilon + Lk^\alpha.
 \end{align*}
 Hence, we have $|f(x)-f(y)|\leq C|x-y|^\alpha$ for any $x,y\in A$. %In particular, $f$ is uniformly continuous on $A$.
 Consequently, there is an $\alpha$-H\"older continuous function $g:\mathbb R\rightarrow\mathbb R$ such that $g(x) = f(x)$ for any $x\in A$. Clearly, $g$ is another version of the absolutely continuous derivative of $F$. The fundamental theorem of calculus yields that $F$ is continuously differentiable and $F'=g$.
\end{proof}

\section{Normal estimates}\label{app 2}
Connected to the last section we find some normal estimates for functions related to Proposition \ref{p:Holder condition}.
\begin{lem}\label{l:normal estimate I}
  Let $Z$ be a normal random variable with mean $\mu\in\mathbb R$ and standard deviation $\sigma>0$ and let $0<h\leq k$. We define
   $$ N(x) := P(Z+x\in[k,k+h]) - P(Z+x\in[0,h]),\quad x\in\mathbb R. $$
 Then $N$ is infinitely differentiable and $N'$ is negative on $(-\infty,-\mu-\sigma)$ and on $(h+k-\mu+\sigma,\infty)$. Moreover, $N'$ is positive if and only if $x\in(a,b)$ where $a,b$ are the two zeros of $N'$ (and $N'$ has exactly two zeros).
\end{lem}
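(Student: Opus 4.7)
The plan is to compute $N'$ explicitly in terms of the Gaussian density of $Z$, derive the tail statement from the convexity structure of that density, and then establish the ``exactly two zeros'' claim by a symmetrizing substitution that reduces the problem to monotonicity of a ratio of $\cosh$-functions.

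To begin, let $f(w)=\sigma^{-1}\phi((w-\mu)/\sigma)$ denote the density of $Z$. Writing $N$ as a combination of four Gaussian CDFs and differentiating shows at once that $N\in C^\infty$ and
\[
  N'(x) = f(h-x) + f(k-x) - f(-x) - f(h+k-x).
\]
Setting $V:=-x$ and applying the fundamental theorem of calculus twice rewrites this as
\[
  N'(x) = -\int_{0}^{h}\!\int_{0}^{k} f''(V+t+s)\,ds\,dt.
\]
Since $f''(w)=\sigma^{-4}((w-\mu)^2-\sigma^2)f(w)>0$ precisely when $|w-\mu|>\sigma$, the integrand $-f''$ is negative whenever the entire segment $[V,V+h+k]$ avoids $[\mu-\sigma,\mu+\sigma]$. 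This delivers $N'(x)<0$ when $V>\mu+\sigma$ (i.e.\ $x<-\mu-\sigma$) and when $V+h+k<\mu-\sigma$ (i.e.\ $x>h+k-\mu+\sigma$), which is the tail claim.

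For the central claim I would symmetrize by setting $U:=V-\mu+(h+k)/2$ together with $a:=(h+k)/2$ and $b:=(k-h)/2$, so that $0\le b<a$ (strictly, since $h>0$). The four Gaussian exponents $(V+r-\mu)^2$ for $r\in\{0,h,k,h+k\}$ become $(U\mp a)^2$ and $(U\mp b)^2$, and pulling out the common factor $e^{-U^2/(2\sigma^2)}$ yields
\[
  N'(x) = \frac{2\,e^{-U^{2}/(2\sigma^{2})}}{\sigma\sqrt{2\pi}}\Bigl[e^{-b^{2}/(2\sigma^{2})}\cosh(bU/\sigma^{2}) - e^{-a^{2}/(2\sigma^{2})}\cosh(aU/\sigma^{2})\Bigr],
\]
an even function of $U$. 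Hence $N'(x)=0$ is equivalent to $\Psi(U):=\cosh(bU/\sigma^{2})/\cosh(aU/\sigma^{2})=e^{(b^{2}-a^{2})/(2\sigma^{2})}$, where the right-hand side lies in $(0,1)$.

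The main obstacle is showing this equation has exactly two solutions; the key ingredient is strict monotonicity of $\Psi$ on $(0,\infty)$. I compute
\[
  (\ln\Psi)'(U) = \sigma^{-2}\bigl[b\tanh(bU/\sigma^{2}) - a\tanh(aU/\sigma^{2})\bigr],
\]
so monotonicity reduces to showing that $t\mapsto t\tanh(tU/\sigma^{2})$ is strictly increasing in $t\ge 0$ for each fixed $U>0$; this is immediate because its $t$-derivative $\tanh(tU/\sigma^{2})+tU\sigma^{-2}\,\mathrm{sech}^{2}(tU/\sigma^{2})$ is positive. Combining $\Psi(0)=1$, $\Psi(U)\to 0$ as $|U|\to\infty$, and evenness then gives exactly two zeros $\pm U_{0}$ of $N'$, symmetric about $U=0$. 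Since $N'$ at $U=0$ equals $\tfrac{2}{\sigma\sqrt{2\pi}}(e^{-b^{2}/(2\sigma^{2})}-e^{-a^{2}/(2\sigma^{2})})>0$, we get $N'>0$ on $(-U_{0},U_{0})$ and $N'<0$ on its complement. Translating back through $U=V-\mu+(h+k)/2=-x-\mu+(h+k)/2$ yields the two zeros $a,b$ of $N'$ in the $x$-variable, symmetric about $x=-\mu+(h+k)/2$. The only nontrivial step is spotting the change of variables $U=V-\mu+(h+k)/2$ that exposes the evenness; everything else is one-line computation.
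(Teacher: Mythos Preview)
Your proof is correct. Both you and the paper obtain the tail negativity of $N'$ from the same double-integral representation in terms of the second derivative of the Gaussian density.

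Where you diverge is in the ``exactly two zeros'' step. The paper normalises to $\mu=0$, $\sigma=1$, writes $N'(x)=-\phi(x)+\phi(x-h)+\phi(x-k)-\phi(x-h-k)$, checks directly that $N'>0$ on $[h/2,k+h/2]$, and then computes $N''(x)=-xN'(x)+h\phi(x-h)+k\phi(x-k)-(h+k)\phi(x-h-k)$; at any zero $\tilde a\le h/2$ one has $|\tilde a-h-k|>|\tilde a-h|,|\tilde a-k|$, so the residual combination is positive and $N''(\tilde a)>0$. Hence every zero of $N'$ in $(-\infty,h/2]$ is a simple up-crossing, forcing uniqueness on that side, and symmetry of $N'$ about $(h+k)/2$ handles the other side. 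Your symmetrising substitution $U=-x-\mu+(h+k)/2$ and $\cosh$-ratio reduction is a genuinely different route: it converts the zero question into strict monotonicity of $U\mapsto\cosh(bU/\sigma^{2})/\cosh(aU/\sigma^{2})$ on $(0,\infty)$, which is clean, makes the evenness (and hence the symmetry the paper invokes separately) manifest, and avoids the slightly delicate sign check for $N''$ at a zero. The paper's argument is more hands-on; yours is more algebraic and arguably more transparent. One cosmetic point: you use $a,b$ both for $(h+k)/2,(k-h)/2$ and then for the two zeros of $N'$ in the $x$-variable---rename one of the pairs to avoid a clash.
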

\begin{proof}
  Clearly, $N$ is infinitely differentiable. Thus, we have
   $$ N(x) = \frac{1}{\sigma^2}\int_0^h\int_0^k \phi'\left( \frac{y+z-x-\mu}{\sigma} \right)dydz $$
   for any $x\in\R$. Hence, by dominated convergence we get
   $$ N'(x) = \frac{-1}{\sigma^3}\int_0^h\int_0^k \phi''\left( \frac{y+z-x-\mu}{\sigma} \right)dydz. $$
   Since $\phi''$ is positive on $\R\backslash(-1,1)$ the first claim follows.
   
  If we show that $N'$ has exactly two zeros, then the second claim follows as well. By scaling and shifting we may assume that $\mu=0$ and $\sigma=1$. Then, we have
   \begin{align*}
       N'(x) &= -\phi(x)+\phi(x-h)+\phi(x-k)-\phi(x-h-k)
       %- \int_0^k\int_0^h \phi''(y+z-x) dydz \\
       %&= \int_0^h\int_0^k(1-(y+z-x)^2)\phi(y+z-x)dydz \\
       %&= -\phi(x)+\phi(x-h)+\phi(x-k)-\phi(x-h-k)
  \end{align*}
for any $x$. We get
   \begin{align*}
      N'(x) &= ( \phi(x-h)-\phi(x) ) + ( \phi(x-k)-\phi(x-h-k) ) > 0
   \end{align*}
 for any $x\in [h/2,k+h/2]$. Clearly, there is $x_0<0$ and $x_1>h+k$ such that $N'(x_0)<0$ and $N'(x_1)<0$. By the intermediate value theorem there is $a\in [x_0,h/2]$ such that $N'(a)=0$ and $b\in[k+h/2,x_1]$ such that $N'(b)=0$. %x_0:=h/2-log(2)/h and x_1:=k+h/2+log(2)/h
 
 Moreover, we have
  \begin{align*}
    N''(x) = -xN'(x) +h\phi(x-h)+k\phi(x-k)-(h+k)\phi(x-h-k)
  \end{align*}
for any $x\in\R$ and for any zero $\tilde a\in(-\infty,h/2]$ we have
  \begin{align*}
    N''(\tilde a) = h\phi(\tilde a-h)+k\phi(\tilde a-k)-(h+k)\phi(\tilde a-h-k) > 0.
  \end{align*}
   Consequently, $N'$ has exactly one zero in $(-\infty,h/2)$. By symmetry of $N'$ around $(h+k)/2$ we have that $N'$ has exactly one zero in $(k+h/2,\infty)$, namely $b=(h+k)/2-a$.
\end{proof}

\begin{lem}\label{l:normal estimate II}
  Assume the requirements of Lemma \ref{l:normal estimate I} and let $\alpha\in(0,1]$. Then we have
   $$ |N(x)| \leq \frac{hk^\alpha}{\sqrt{2\pi e^{\alpha}}\sigma^{1+\alpha}} $$
\end{lem}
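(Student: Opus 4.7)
The plan is to rewrite $N(x)$ as a single integral whose integrand is a difference of values of $\phi$, so that the $\alpha$-Hölder continuity of $\phi$ (Corollary \ref{c:Holder cont of phi}) can be applied pointwise.

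First I would express both probabilities through the density of $Z$. Since $Z$ has density $y \mapsto \tfrac{1}{\sigma}\phi\!\left(\tfrac{y-\mu}{\sigma}\right)$, a change of variable yields
\begin{align*}
 N(x) &= \int_k^{k+h}\frac{1}{\sigma}\phi\!\left(\frac{y-x-\mu}{\sigma}\right)dy - \int_0^h\frac{1}{\sigma}\phi\!\left(\frac{y-x-\mu}{\sigma}\right)dy \\
      &= \int_0^h \frac{1}{\sigma}\left[\phi\!\left(\frac{y+k-x-\mu}{\sigma}\right)-\phi\!\left(\frac{y-x-\mu}{\sigma}\right)\right]dy,
\end{align*}
where I used the substitution $y\mapsto y+k$ on the first integral to match the integration range of the second.

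Next I would apply the triangle inequality for integrals and invoke Corollary \ref{c:Holder cont of phi}: since the two arguments of $\phi$ inside the bracket differ precisely by $k/\sigma$, we obtain
$$ \left|\phi\!\left(\frac{y+k-x-\mu}{\sigma}\right)-\phi\!\left(\frac{y-x-\mu}{\sigma}\right)\right| \leq \frac{1}{\sqrt{2\pi e^\alpha}}\left(\frac{k}{\sigma}\right)^\alpha$$
uniformly in $y,x$. Integrating the constant bound over $y\in[0,h]$ and dividing by $\sigma$ gives
$$ |N(x)| \leq \frac{h}{\sigma}\cdot\frac{1}{\sqrt{2\pi e^\alpha}}\cdot\frac{k^\alpha}{\sigma^\alpha} = \frac{hk^\alpha}{\sqrt{2\pi e^\alpha}\,\sigma^{1+\alpha}},$$
which is the claimed bound.

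There is no real obstacle here; the only mild care needed is to ensure the correct orientation of the difference (so that the increment in the argument of $\phi$ is exactly $k/\sigma$, not $(k+h)/\sigma$, which would give the weaker bound $(k+h)^\alpha$). Picking the bracket $\phi((y+k-x-\mu)/\sigma)-\phi((y-x-\mu)/\sigma)$ precisely achieves this, and the hypothesis $h\leq k$ is not even needed for the inequality itself; it only reflects the normalisation used elsewhere in the paper.
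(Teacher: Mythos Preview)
Your proof is correct and follows essentially the same route as the paper: rewrite $N(x)$ as $\tfrac{1}{\sigma}\int_0^h[\phi((y+k-x-\mu)/\sigma)-\phi((y-x-\mu)/\sigma)]\,dy$ and apply the $\alpha$-H\"older bound for $\phi$ from Corollary~\ref{c:Holder cont of phi}. Your write-up is in fact more explicit than the paper's, which compresses the derivation into two displayed lines.
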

\begin{proof}
 The fundamental theorem of calculus and Corollary \ref{c:Holder cont of phi} yield
  \begin{align*}
    |N(x)| &\leq \frac{1}{\sigma}\int_0^h \left|\phi\left( \frac{y+k-x-\mu}{\sigma} \right)-\phi\left( \frac{y-x-\mu}{\sigma} \right) \right|dy \\
       &\leq \frac{hk^\alpha}{\sqrt{2\pi e^{\alpha}}\sigma^{1+\alpha}}.
\end{align*}   
\end{proof}

\begin{lem}\label{l:phi square}
 Let $\sigma>0$, $y,z\in\mathbb R$. Then we have
  $$ \frac{1}{\sigma^2}\int_{\R}\phi\left(\frac{x-y}\sigma\right)\phi\left(\frac{x-z}\sigma\right)dx = \frac{\phi((y-z)/\sqrt{2\sigma^2})}{\sqrt{2\sigma^2}}. $$
\end{lem}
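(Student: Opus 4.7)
The plan is to carry out the convolution-of-Gaussians identity by an explicit completion of the square. Writing out $\phi$ as $\phi(t)=\frac{1}{\sqrt{2\pi}}e^{-t^2/2}$, the integrand becomes
\[
\phi\!\left(\frac{x-y}{\sigma}\right)\phi\!\left(\frac{x-z}{\sigma}\right)
=\frac{1}{2\pi}\exp\!\left(-\frac{(x-y)^2+(x-z)^2}{2\sigma^2}\right),
\]
so the entire task is to rewrite the numerator in a form that separates an $x$-dependent Gaussian from a constant factor in $(y-z)$.

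The key algebraic step I would do first is the identity
\[
(x-y)^2+(x-z)^2=2\!\left(x-\frac{y+z}{2}\right)^{\!2}+\frac{(y-z)^2}{2},
\]
which is a one-line computation via expansion. Plugging this back in turns the integrand into a product of a centred Gaussian in $x$ with variance $\sigma^2/2$ times the constant $\exp(-(y-z)^2/(4\sigma^2))$.

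Next I would evaluate the $x$-integral using $\int_{\R}\exp(-(x-m)^2/\sigma^2)\,dx=\sigma\sqrt{\pi}$ (a trivial rescaling of the standard Gaussian integral). After dividing by $\sigma^2$ as required by the statement, I obtain
\[
\frac{1}{\sigma^2}\int_{\R}\phi\!\left(\frac{x-y}{\sigma}\right)\phi\!\left(\frac{x-z}{\sigma}\right)dx
=\frac{1}{2\sigma\sqrt{\pi}}\exp\!\left(-\frac{(y-z)^2}{4\sigma^2}\right).
\]

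Finally I would check that the right-hand side of the lemma equals the same expression: unfolding $\phi((y-z)/\sqrt{2\sigma^2})/\sqrt{2\sigma^2}$ gives exactly $\frac{1}{2\sigma\sqrt{\pi}}\exp(-(y-z)^2/(4\sigma^2))$, completing the proof. There is no genuine obstacle here; the only thing to be careful about is keeping the factors of $\sqrt{2\pi}$, $\sqrt{\pi}$ and $\sigma$ straight when matching the two sides, which is why I would organise the calculation so that both sides are reduced independently to the same closed form $\frac{1}{2\sigma\sqrt{\pi}}\exp(-(y-z)^2/(4\sigma^2))$.
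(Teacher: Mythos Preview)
Your proof is correct and follows essentially the same idea as the paper: both arguments evaluate the Gaussian convolution by completing the square. The paper first substitutes $u=(x-y)/\sigma$ to reduce to the case $\sigma=1$ and then quotes the identity $\int_{\R}\phi(u)\phi(u+a)\,du=\frac{1}{\sqrt{2}}\phi(a/\sqrt{2})$, whereas you carry out the completion of the square directly for general $\sigma$; the difference is purely organisational.
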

\begin{proof}
 Using the substitution $u=\frac{x-y}\sigma$ and the notation $a:=\frac{y-z}\sigma$ we get
  \begin{align*}
    \frac{1}{\sigma^2}\int_{\R}\phi\left(\frac{x-y}\sigma\right)\phi\left(\frac{x-z}\sigma\right)dx &= \frac{1}{\sigma} \int_{\R} \phi(u)\phi(u+a)du \\
     &= \frac{1}{\sigma\sqrt{2}}\phi(a/\sqrt{2}) \\
     &= \frac{\phi((y-z)/\sqrt{2\sigma^2})}{\sqrt{2\sigma^2}}.
  \end{align*}
\end{proof}

\end{document}